\crefname{hypothesis}{Hypothesis}{Hypotheses}
\title{Least $H^2$ Norm Updating Quadratic Interpolation Model Function for Derivative-free Trust-region Algorithms}
\author{Pengcheng Xie\thanks{Institute of Computational Math and Scientific/Engineering Computing, Academy of Mathematics and Systems Science, Chinese Academy of Sciences, and University of Chinese Academy of Sciences, China (\email{xpc@lsec.cc.ac.cn}, \email{yyx@lsec.cc.ac.cn}).}
\and Ya-xiang Yuan\footnotemark[2]
}
\DeclareMathAlphabet{\mathcal}{OMS}{cmsy}{m}{n}
\DeclareSymbolFont{largesymbols}{OMX}{cmex}{m}{n}
\newcommand{\x}{\mathbf{x}}
\newcommand{\y}{\mathbf{y}}
\newcommand{\g}{\mathbf{g}}
\newcommand{\dd}{\mathbf{d}}
\newcommand{\e}{\mathbf{e}}
\newcommand{\z}{\mathbf{z}}
\newcommand{\V}{\mathcal{V}}
\newcommand{\C}{\mathcal{C}}
\newcommand{\LL}{\mathcal{L}}
\newcommand{\HH}{\mathcal{H}}
\newcommand{\WW}{\mathcal{W}}
\begin{document}

\maketitle

\begin{abstract}
Derivative-free optimization methods are numerical methods for optimization problems in which no derivative information is used. Such optimization problems are widely seen in many real applications. One particular class of derivative-free optimization algorithms is trust-region algorithms based on quadratic models given by under-determined interpolation. Different techniques in updating the quadratic model from iteration to iteration will give different interpolation models. In this paper, we propose a new way to update the quadratic model by minimizing the $H^2$ norm of the difference between neighbouring quadratic models. Motivation for applying the $H^2$ norm is given, and theoretical properties of our new updating technique are also presented. Projection in the sense of $H^2$ norm and interpolation error analysis of our model function are proposed. We obtain the  coefficients of the quadratic model function by using the KKT conditions. Numerical results show advantages of our model, and the derivative-free algorithms based on our least $H^2$ norm updating quadratic model functions can solve test problems with fewer function evaluations than algorithms based on least Frobenius norm updating. 
\end{abstract}

\begin{keywords}
interpolation, quadratic model, derivative-free, $H^2$ norm, trust-region
\end{keywords}

\begin{MSCcodes}
90C56, 90C30, 65K05, 90C90
\end{MSCcodes}

\section{Introduction}
In this paper, we consider the unconstrained derivative-free optimization problem
\begin{equation}\label{dfo-uncon}
\min_{\x \in {\Re}^{n}}\ f(\x),
\end{equation}
where \(f\) is a real-valued function with  first-order or higher-order derivative unavailable. Derivative-free optimization methods are numerical methods for optimization problems in which no derivative information is used. Since only function values are used, derivative-free optimization methods are normally simple, and are easy to be implemented. These methods have broad applications. For example, applications include tuning of algorithmic parameters \cite{Audet:2006:FOA:1237686.1237688}, molecular geometry in biochemistry \cite{doi:10.1002/jcc.540150606},  automatic error analysis \cite{Higham:1993:ODS:153038.153039, Higham2002}, and dynamic pricing \cite{levina2009dynamic}. Optimal design problems in engineering design \cite{booker1998managing, Booker1998-2, Serafini1999}  refer to derivative-free optimization as well, which include wing platform design \cite{Conn016}, aeroacoustic shape design \cite{Conn164, Conn165}, and hydrodynamic design \cite{Conn087}. There are still some disadvantages of derivative-free methods on account of unavailable derivative information. It is  difficult to obtain accurate approximations for the  objective function or its derivative. Besides, the theoretical properties of such methods are usually complicated to analyze. 

We list some types of derivative-free algorithms as examples, and more details are described and discussed in related work \cite{brent2013algorithms, kelley1999iterative, conn2009introduction,zhang2021}. 
One of the types is the line-search type. Some examples are alternating coordinate method, rotating coordinate method  \cite{rosenbrock1960automatic}, conjugate direction method  \cite{powell1964efficient}, finite difference quasi-Newton method \cite{Gill1983}, and general line-search method  \cite{Grippo1988}. There exist methods using approximate Hessian eigenvectors as directions \cite{gratton2016second}, and an indicator for the switch from derivative-free to derivative-based optimization \cite{gratton2017indicator}.

Another type is direct-search method. {\color{black}For example, there is the pattern-search method  \cite{Jeeves, Dennis91,Torczon1991}.} Simplex methods are in this type, which include Nelder-Mead method \cite{Nelder1965}. There are also modified simplex method \cite{Tseng1999},  directional direct-search methods, generating set search method \cite{Kolda2003}, and mesh adaptive direct-search methods with an example NOMAD \cite{nomad}. Direct-search methods with worst-case analysis are considered \cite{dodangeh2016optimal,konevcny2014simple}, and the methods with probabilistic descent are considered by Gratton, Royer, Vicente, and Zhang \cite{gratton-1,gratton2019direct}.

Some derivative-free methods are hybrid. We have implicit filtering method \cite{Kelley2011}, 
adaptive regularized method  \cite{Cartis2012}, Bayesian optimization \cite{Bayes01} and so on. Heuristic algorithms can be derivative-free. For instance, there are simulated annealing method \cite{SimulatedAnnealing01}, algorithms based on artificial neural network \cite{ANN01}, and genetic algorithm \cite{audet2017derivative}.

Closely connected to what we will discuss in this paper, model-based method is another kind of derivative-free methods. Polynomial model methods refer to linear interpolation \cite{Spendley1962}, quadratic interpolation  \cite{Winfield1969,  Winfield1973}, under-determined quadratic interpolation \cite{Powell2003}, and regression \cite{Conn2008b}. Radial basis function interpolation \cite{Mattias2000} is another choice to obtain the model. Probabilistic models can also be used in trust-region methods \cite{bandeira2014convergence,Gratton-2017}. There are also model-based methods for noised problems \cite{troltzsch2012model}.   

Derivative-free algorithms have been designed for problems with special structures as well. For example, the algorithms for solving constrained problems \cite{COBYlA,powell2009bobyqa,gratton2011active}, large-scale nonlinear data assimilation problems \cite{gratton2014derivative}, least-squares minimization \cite{zhang2010derivative, cartis2019derivative} and so on.

The algorithm proposed in this paper is a development of Powell's derivative-free algorithms \cite{Powell2003,NEWUOA,powell2013beyond}. The main idea of Powell's algorithms is to obtain the  quadratic model function by under-determined interpolation at each iteration, for which there is an updating on previous model(s). The unique quadratic model $Q_k$ is obtained at the $k$-th iteration by solving optimization problem
\begin{equation}
\label{objfunction}
  \min_{Q\in\mathcal{Q}}\ \left\|\nabla^{2} Q-\nabla^{2} Q_{k-1}\right\|_{{F}}^{2}, \
  \operatorname{subject\ to}\ Q(\x_i)=f(\x_i),\ \x_i \in \mathcal{X}_{k},
\end{equation}
where \(\mathcal{X}_{k}\) denotes the interpolation set at the $k$-th iteration, and $\mathcal{Q}$ denotes the set of quadratic functions\footnote{Here and below, we use the terminology ``quadratic function'' to refer to the polynomial with integer order that is not larger than 2.}. 
Then the method aims to obtain a new iteration point by minimizing quadratic model $Q_k$ within the trust region. Powell's derivative-free softwares include 
COBYLA \cite{COBYlA}, UOBYQA \cite{UOBYQA}, NEWUOA \cite{NEWUOA}, BOBYQA \cite{powell2009bobyqa}, LINCOA \cite{powell2015fast}.  Besides, Powell's derivative-free optimization solvers with MATLAB and Python interfaces are designed by Zhang\footnote{https://www.pdfo.net}.

There are actually different objective functions  of (\ref{objfunction}) to be chosen, which also generate different models respectively. For
example, \(\left\|\nabla^{2} Q-\nabla^{2} Q_{k-1}\right\|_{{F}}^{2}\)  can be replaced by
\begin{itemize}
\item[-] \( \left\|\nabla^{2} Q\right\|_{{F}}^{2}+\left\|\nabla Q\right\|_{2}^{2} \), proposed by Conn and Toint \cite{conn1996algorithm};
\item[-]  \(\left\|\nabla^{2} Q\right\|_{{F}}^{2}\), proposed by Conn, Scheinberg and Toint \cite{conn1997convergence};
\item[-]  \(\left\|\operatorname{vec}\left(\nabla^{2} Q\right)\right\|_{1}\), proposed by Bandeira, Scheinberg and Vicente \cite{bandeira2012computation}.
\end{itemize}

As shown in the book of Conn, Scheinberg and Vicente \cite{conn2009introduction}, to obtain a fully linear interpolation model (error bound is given by (\ref{fullylin}) in the following), there should be at least $n+1$ interpolation points, and this is expensive in the derivative-free optimization application. Actually, $n+1$ interpolation conditions, referring to $n+1$ equalities constraint conditions, can be relaxed to making the average interpolation error in a region smaller. We apply $H^2$ norm to measure and control the interpolation error.  The least \(H^{2}\) norm updating quadratic model function we proposed can reduce the lower bound of the number of the interpolation points, and can  control the interpolation error locally at the same time.

The rest of this paper is organized as the following. Section \ref{section2} presents some basic results about the \(H^{2}\) norm of a quadratic function. In Section \ref{section3}, motivation of applying the  least \(H^{2}\) norm to update quadratic model function, the projection theory in the sense of \(H^{2}\) norm, and the interpolation error bound are discussed. We propose the least \(H^{2}\) norm updating quadratic model function in Section \ref{section4} with the details of KKT conditions of the  optimization problem to obtain the quadratic model. Furthermore, we  provide more implementation details, including the updating formula of the KKT inverse matrix, and the model improvement step that maximizes the denominator of the updating formula. Numerical results are shown in Section \ref{section7}. At last, a conclusion and some possible future work are given.

\section{\texorpdfstring{$H^2$}{H 2} Norm and Derivative-free Trust-region Algorithms\label{section2}} 
\begin{sloppypar}
We firstly clarify the notation before {\color{black}the} more discussion. A region is defined as 
\(
\Omega=\mathcal{B}_2^{r}(\x_{0}) = \left \{ \x \in {\Re}^{n}: \left\|\x-\x_{0}\right\|_2 \leq r \right\},
\) 
and 
\(
\mathbf{G}=\nabla^2Q \in {\Re}^{n \times n},  \g=\nabla Q(\x_0)  \in {\Re}^{n}, c \in {\Re}
\). 
\(\V_{n}\) denotes the volume of the $n$-dimensional $\ell_2$ unit ball  \(\mathcal{B}_2^{1}(\x_{0})\). 
\(\|\cdot\|_F\) denotes the Frobenius norm, i.e., given the matrix 
\(\mathbf{C}=\left(c_{i j}\right)_{n \times n}\), it holds that \(\left\| \mathbf{C} \right\|_{F} = \bigg( \sum_{i, j}c_{i j}^{2} \bigg)^{\frac{1}{2}}\).
\end{sloppypar}
 
We give the definitions of different kinds of (semi-)norm in the following.

\begin{definition}\label{def42}
Let \(u\) be a function over \(\Omega \subseteq {\Re}^n\), and \(1 \leq p < \infty\). If \(u\) is {2-nd} order differential over \(\Omega\) and \(\frac{\partial^{a} u} {\partial \x^{a}} \in L^2 (\Omega)\) for any natural number \(a\) which satisfies that \(\vert a\vert\leq 2\), then we have
\begin{equation*}
\left\{
\begin{aligned}
    &\| u \|_{H^{0}(\Omega)} = \left(\int_{\Omega} \vert u(\x)\vert^2 d\x  \right)^{\frac{1}{2}},\ \vert u\vert_{H^{1}(\Omega)} = \left(\int_{\Omega} \| \nabla u(\x) \|_2^2 d\x \right)^{ \frac{1}{2}},\\ 
  &\vert u\vert_{H^{2}(\Omega)} = \left(\int_{\Omega} \| \nabla^2 u(\x) \|_F^2 d\x \right)^{\frac{1}{2}}.
  \end{aligned}
  \right.
  \end{equation*}
  Besides, the definition of \(H^2\) norm of the function $u$ is 
  \begin{align*}
      \| u \|_{H^{2}(\Omega)}^2=\| u \|_{H^{0}(\Omega)}^2+\vert u\vert_{H^{1}(\Omega)}^2+\vert u\vert_{H^{2}(\Omega)}^2.
  \end{align*}
  \end{definition}

\(\vert \cdot \vert\) denotes semi-norm, and \(\Vert \cdot \Vert\) denotes norm. Based on Definition \ref{def42}, simple calculations derive the following theorem about $H^2$ norm of a quadratic function. 
\begin{theorem}
\label{theorem1}
Given the quadratic function
\begin{equation}
\label{quadraticQGgc}
Q(\x)=\frac{1}{2}(\x-\x_0)^{\top}\mathbf{G}(\x-\x_0)+(\x-\x_0)^{\top}\g+c,
\end{equation}
it holds that
\begin{equation}\label{calculationh2}
\begin{aligned}
\|Q\|_{H^{2}(\mathcal{B}_2^{r}(\x_0))}^{2}  
=& \V_{n} r^{n} \Bigg[ \left(\frac{r^{4}}{2(n+4)(n+2)}+\frac{r^{2}}{n+2}+1\right)\|\mathbf{G}\|_{F}^{2}+\left(\frac{r^{2}}{n+2}+1\right)\|\g\|_{2}^{2}\\
  & +\frac{{r^{4}}}{4 (n+4)(n+2)} (\operatorname{Tr}(\mathbf{G}))^{2} +\frac{ r^{2}}{n+2} c\operatorname{Tr} (\mathbf{G}) + c^{2}\Bigg],
\end{aligned}
\end{equation}
where $\operatorname{Tr}(\cdot)$ denotes the trace of a matrix. 
\end{theorem}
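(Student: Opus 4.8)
The plan is to reduce to the case $\x_0=\mathbf{0}$ first. By the change of variables $\x\mapsto\x+\x_0$, which maps $\mathcal{B}_2^r(\mathbf{0})$ onto $\mathcal{B}_2^r(\x_0)$ and commutes with differentiation, the quantity $\|Q\|_{H^2(\mathcal{B}_2^r(\x_0))}$ is unchanged, so I may assume $Q(\x)=\tfrac12\x^\top\mathbf{G}\x+\x^\top\g+c$ with $\mathbf{G}$ symmetric, $\nabla Q(\x)=\mathbf{G}\x+\g$, and $\nabla^2 Q(\x)\equiv\mathbf{G}$. I then compute the three pieces of $\|Q\|_{H^2(\Omega)}^2$ from Definition~\ref{def42} separately. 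The Hessian term is immediate: $|Q|_{H^2(\Omega)}^2=\int_{\mathcal{B}_2^r(\mathbf{0})}\|\mathbf{G}\|_F^2\,d\x=\V_n r^n\|\mathbf{G}\|_F^2$.

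For the other two pieces, write $\mathcal{B}=\mathcal{B}_2^r(\mathbf{0})$ and expand $\|\nabla Q\|_2^2=\x^\top\mathbf{G}^2\x+2\g^\top\mathbf{G}\x+\|\g\|_2^2$ and $Q^2=\tfrac14(\x^\top\mathbf{G}\x)^2+(\x^\top\mathbf{G}\x)(\x^\top\g)+c\,\x^\top\mathbf{G}\x+(\x^\top\g)^2+2c\,\x^\top\g+c^2$. Every term of odd total degree in $\x$ integrates to zero over $\mathcal{B}$ by the symmetry $\x\mapsto-\x$, so what remains is governed by the second- and fourth-order moment tensors of Lebesgue measure on $\mathcal{B}$. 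By rotational invariance these have the form $\int_{\mathcal{B}}x_ix_j\,d\x=\lambda\,\delta_{ij}$ and $\int_{\mathcal{B}}x_ix_jx_kx_l\,d\x=\mu\,(\delta_{ij}\delta_{kl}+\delta_{ik}\delta_{jl}+\delta_{il}\delta_{jk})$; taking traces gives $n\lambda=\int_{\mathcal{B}}\|\x\|_2^2\,d\x$ and $n(n+2)\mu=\int_{\mathcal{B}}\|\x\|_2^4\,d\x$, and these two radial integrals are elementary, so using $\omega_{n-1}=n\V_n$ for the surface measure of the unit sphere one gets $\lambda=\V_n r^{n+2}/(n+2)$ and $\mu=\V_n r^{n+4}/\big((n+2)(n+4)\big)$.

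With the moments known, the rest is linear-algebra bookkeeping: $\int_{\mathcal{B}}\x^\top\mathbf{A}\x\,d\x=\lambda\operatorname{Tr}(\mathbf{A})$, $\int_{\mathcal{B}}(\x^\top\g)^2\,d\x=\lambda\|\g\|_2^2$, $\int_{\mathcal{B}}\x^\top\mathbf{G}^2\x\,d\x=\lambda\operatorname{Tr}(\mathbf{G}^2)=\lambda\|\mathbf{G}\|_F^2$, and, crucially using the symmetry of $\mathbf{G}$,
\begin{equation*}
\int_{\mathcal{B}}(\x^\top\mathbf{G}\x)^2\,d\x=\mu\!\sum_{i,j,k,l}\!G_{ij}G_{kl}\big(\delta_{ij}\delta_{kl}+\delta_{ik}\delta_{jl}+\delta_{il}\delta_{jk}\big)=\mu\big((\operatorname{Tr}(\mathbf{G}))^2+2\|\mathbf{G}\|_F^2\big).
\end{equation*}
Substituting, I obtain $|Q|_{H^1(\Omega)}^2=\lambda\|\mathbf{G}\|_F^2+\V_n r^n\|\g\|_2^2$ and $\|Q\|_{H^0(\Omega)}^2=\tfrac14\mu\big((\operatorname{Tr}(\mathbf{G}))^2+2\|\mathbf{G}\|_F^2\big)+c\lambda\operatorname{Tr}(\mathbf{G})+\lambda\|\g\|_2^2+c^2\V_n r^n$; adding the three pieces and factoring out $\V_n r^n$ yields exactly \eqref{calculationh2}. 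The only genuinely non-routine step is pinning down the fourth-moment tensor, namely both its $\delta$-structure and the constant $\mu$, and then remembering to invoke the symmetry of $\mathbf{G}$ in the $\delta_{il}\delta_{jk}$ contraction, which is precisely what converts $\operatorname{Tr}(\mathbf{G}^2)$ into $\|\mathbf{G}\|_F^2$; everything else is expansion and collection of terms.
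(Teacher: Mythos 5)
Your proof is correct and follows essentially the same route as the paper: decompose $\|Q\|_{H^2}^2$ into the $H^0$, $H^1$, and $H^2$ pieces and evaluate each integral, arriving at exactly the intermediate formulas the paper states. The only difference is that the paper asserts these as ``direct calculations'' with a reference to Zhang's thesis, whereas you actually supply them via the second- and fourth-moment tensors of the ball, and your moment constants $\lambda=\V_n r^{n+2}/(n+2)$ and $\mu=\V_n r^{n+4}/((n+2)(n+4))$ and the contraction $(\operatorname{Tr}\mathbf{G})^2+2\|\mathbf{G}\|_F^2$ all check out.
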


\begin{proof}
Direct calculations lead to the following facts, and one can obtain more related details in Zhang's thesis \cite{012}.
\begin{align*}
\|Q(\x)\|_{H^{0}\left(\mathcal{B}_{2}^{r}(\x_{0})\right)}^{2}
=& \V_{n} r^{n}\left(\frac{2 r^{4}}{4(n+2)(n+4)}\|\mathbf{G}\|_{F}^{2}+\frac{r^{2}}{n+2}\|\g\|_{2}^{2}\right.\\
&\left.+\frac{r^{4}}{4(n+2) (n+4)} (\operatorname{Tr}(\mathbf{G}))^{2}+\frac{r^{2}}{n+2}c\operatorname{Tr}(\mathbf{G})+c^{2}\right),
\end{align*}
and 
\[
    \vert Q\vert_{H^1 (\mathcal{B}_2^r (\x_0))}^{2} = \V_{n} r^n \left( \frac{r^2}{n+2} \| \mathbf{G} \|_F^2 + \| \g \|_2^2  \right).
\] 
Moreover, it holds that
\(
\vert Q\vert_{H^{2}\left(\mathcal{B}_2^{r}(\x_0)\right)}^{2}
 =\V_{n} r^{n}\|\mathbf{G}\|_{F}^{2}.
\)
Therefore, (\ref{calculationh2}) is proved.
\end{proof}

\begin{remark}
The point $\x_0$ denotes the center point of the region calculated $H^2$ norm on, or we say the base point \cite{NEWUOA}.
\end{remark}
\begin{sloppypar}
Since the interpolation model function in this paper is proposed especially for the derivative-free trust-region algorithm based on interpolation model, we present the framework of model-based derivative-free trust-region algorithms as Algorithm \ref{algo-TR} before giving more details. Notice that, at the criticality step, we accept the model if the interpolation set is well poised in the traditional sense in the derivative-free optimization, but does not request the model to be a fully linear one, when using the least $H^2$ norm updating model in the case that the number of the interpolation points is less than $n+1$. 
\end{sloppypar}

\begin{algorithm}[H]
\caption{Framework of model-based derivative-free trust-region algorithms\label{algo-TR}} 
\begin{algorithmic}[1]
\STATE \textbf{Input}:  the black-box objective function \(f\) and the initial point \(\x_{\text{int}}\).
\STATE \textbf{Output}: the minimum point \({\x}^*\) and the minimum function value  \(f_{\text{opt}}\).
\STATE Initialize and obtain the interpolation set \(\mathcal{X}_0\), the initial quadratic model function \(Q_{0}({\x})\),  the parameter \(\Delta_{0}, \gamma, \varepsilon_c, \mu, \hat{\eta}_1,\hat{\eta}_2\).

\STATE \textbf{Step 1} (\textbf{Criticality step}): Update the model and sample set, and obtain \(Q_{k}\) and \(\mathcal{X}_{k}=\mathcal{X}_{k-1}\cup\{\x_{\text{new}}\}\backslash\{\x_{t}\}\), where $\x_t$ is the farthest point dropped at this step. If $\left\|\g_{k}\right\|>\epsilon_{c}$, then we accept the $\nabla Q_{k}(\x_{\text{opt}})$ and $\Delta_{k}$, where $\x_{\text{opt}}$ is the minimum point in the current iteration. If $\left\|\nabla Q_{k}(\x_{\text{opt}})\right\| \leq \varepsilon_{c}$, then call the model-improvement step to attempt to certify if the model is {\color{black}accepted} on the current trust region, in the case that the model $Q_{k}$ is not certifiably {\color{black}accepted}, or $\Delta_{k}>\mu\left\|\nabla Q_k(\x_{\text{opt}}) \right\|$.

\STATE \textbf{Step 2} (\textbf{Trial step}): 
Solve 
\[
\min_{\dd}  \ Q_k({\x}_{\text{opt}}+ \dd),\  
\text{subject to} \ \| \dd \|_{2} \leq \Delta_k,
\]
and obtain the solution \(\dd_k\).
\STATE \textbf{Step 3} (\textbf{Acceptance of the trial point}): Check if  \(Q_{k}(\x)\) is  a good model. Compute \(f(\x_{\text{opt}}+\dd_{k})\) and define 
\[
\rho_{k}=({f(\x_{\text{opt}})-f(\x_{\text{opt}}+\dd_{k})})/({Q_{k}(\x_{\text{opt}})-Q_{k}(\x_{\text{opt}}+\dd_{k})}).
\]  
If \(\rho_{k} \geq \hat{\eta}_1\), or \(\rho_{k}>0\) and the model is accepted, then \(\x_{k+1}=\x_{\text{opt}}+\dd_{k}\). Otherwise, the model and the iteration remain unchanged, \(\x_{k+1}=\x_{k}\). 
\STATE \textbf{Step 4} (\textbf{Model improvement step}): If \(\rho_{k}<\hat{\eta}_1\) and the model is not accepted, then improve the model (details can be seen in Section 10 in the book of Conn, Scheinberg and Vicente \cite{conn2009introduction}). Define \(Q_{k+1}\) and \(\mathcal{X}_{k+1}\) to be the (possibly improved) model and sample set.
\STATE \textbf{Step 5} (\textbf{Trust-region radius updating}): Set \(\Delta_{k+1}\) based on \(\rho_k\) and \(\Delta_k\), e.g., \(\Delta_{k+1}=\frac{1}{\gamma}\Delta_k\), if $\rho_k<\hat{\eta}_1$; \(\Delta_{k+1}=\gamma\Delta_k\), if $\rho_k>\hat{\eta}_2$; \(\Delta_{k+1}=\Delta_k\), in the other cases.

\STATE Increment $k$ by one and go to {\bf Step 1}, until $\Delta_k<\varepsilon_c$ and $\nabla Q_k(\x_{\text{opt}})<\varepsilon_c$.
  \end{algorithmic} 
\end{algorithm} 

More introduction of derivative-free trust-region methods can be seen in the survey of Larson, Menickelly and Wild \cite{larson2019derivative}, the book of Conn, Scheinberg and Vicente \cite{conn2009introduction}, and the book of Audet and Hare \cite{audet2017derivative}. 

\section{Motivation for Applying \texorpdfstring{$H^2$}{H 2} Norm\label{section3}}

We introduce the motivation of applying \(H^2\) norm to obtain the iterative model function of the objective function in this section. The following discovery reminds us to focus on the relationship between using the norm measured at a point and the norm measured by an average value in a region. Details of this discovery is that for arbitrary quadratic function \(Q\), Zhang \cite{Zhang2014} proved that problems
\begin{equation}
\label{P_{1}(sigma)}
\mathop{\min}\limits_{Q\in\mathcal{Q}}\ \Vert\nabla^2 Q\Vert^{2}_{F}+\sigma\Vert \nabla Q(\x_0)\Vert^{2}_{2},\ 
\text{subject to}\ Q(\x)=f(\x),\ \x\in \mathcal{X},
\end{equation}
and 
\begin{equation}
\label{P_{2}(r)}
\mathop{\min}\limits_{Q\in\mathcal{Q}}\ \vert Q\vert_{H^{1}{(\mathcal{B}^{r}_{2}(\x_{0}))}},\ 
\text{subject to}\ Q(\x)=f(\x),\ \x\in \mathcal{X}
\end{equation}
are equivalent to each other, 
where \(\mathcal{X}\) denotes the corresponding interpolation set, and  \(r=\left(\frac{n+2}{\sigma}\right)^{\frac{1}{2}}\) in (\ref{P_{1}(sigma)}) and (\ref{P_{2}(r)}), which shows the connection between (\ref{P_{1}(sigma)}) and (\ref{P_{2}(r)}).

Furthermore, Powell \cite{powell2004least,NEWUOA} proposed obtaining the least Frobenius norm updating quadratic model function by solving 
\begin{equation}\label{Powell-prob}
\min_{Q\in\mathcal{Q}}\ \left\|\nabla^2Q-\nabla^2Q_{k-1}\right\|^2_{F},\ 
\operatorname{subject\ to}\ Q(\x_{i})=f(\x_{i}),\ \x_{i} \in \mathcal{X}_{k},\ i=1,\ldots,m,
\end{equation}
where \(n+2\le m \le \frac{1}{2}(n+1)(n+2)\). Two advantages of choosing the model function above are listed as follows.

One advantage is that the least Frobenius norm updating quadratic model function keeps being fully linear model locally (see details in the book of Conn, Scheinberg and Vicente \cite{conn2009introduction}), i.e., the model function \(Q\) satisfies the error bound
\begin{equation}\label{fullylin}
\|\nabla f(\y)-\nabla Q(\y)\|_2 \leq \kappa_{\text{eg}} \Delta, \text{ and }  
\vert f(\y)-Q(\y)\vert \leq \kappa_{\text{ef}} \Delta^{2},\ \forall\ \y \in \mathcal{B}_2^{\Delta}(\x),
\end{equation}
where $\kappa_{\text{ef}}$ and $\kappa_{\text{eg}}$ are coefficients. 
The error bound above ensures $Q(\x)$ to be a satisfactory approximation of the objective function, and is helpful for the convergence analysis of the trust-region algorithm based on such model.

The other advantage is that we can iteratively make the quadratic model \(Q_k\) closer to a given quadratic objective function \(f\) than \(Q_{k-1}\) by solving Powell's model subproblem (\ref{Powell-prob}), since equality
\begin{equation}
\label{Powell-proj}
\left\|\nabla^2 Q_{k}-\nabla^2 f\right\|^2_F=\left\|\nabla^2 Q_{k-1}-\nabla^2 f\right\|^2_F-\left\|\nabla^2 Q_{k}-\nabla^2 Q_{k-1}\right\|^2_F
\end{equation}
holds if $f$ is a quadratic function. 

However, there is a lower bound of the number of interpolation points or equations, since it only restricts the freedom of the Hessian matrix by solving subproblem (\ref{Powell-prob}). Besides, former models are only obtained by the interpolation on multiple points, but not consider the average value of the objective function or derivatives in a (trust) region. 
Before proposing how to obtain our new choice of model, we present the convexity property of the objective function (\ref{obj}). 

\begin{theorem}\label{strictlyConvex}
The function
\begin{equation}\label{obj}
C_{1}\Vert Q(\x)\Vert^2_{H^{0}(\Omega)}+C_{2}\vert Q(\x)\vert^2_{H^{1}(\Omega)}+C_{3}\vert Q(\x)\vert^2_{H^{2}(\Omega)} 
\end{equation}
is strictly convex as a function of $Q$, where \(C_{1}, C_{2}, C_{3} \geq 0\) and \(C_1+C_2+C_3>0\).
\end{theorem}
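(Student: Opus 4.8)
The plan is to regard $Q$ through its coefficient vector $\theta=(\operatorname{vec}(\mathbf{G}),\g,c)$ and to observe that the objective in (\ref{obj}) is then a homogeneous quadratic form in $\theta$; this is already explicit from Theorem~\ref{theorem1} and the three closed-form expressions appearing in its proof. Since a quadratic form on a finite-dimensional space is strictly convex precisely when it is positive definite, the whole assertion reduces to showing that (\ref{obj}) vanishes only at $Q\equiv 0$. I would first record that each of the three constituents is individually a positive semidefinite quadratic form, because $\|Q\|_{H^0(\Omega)}^2=\int_\Omega Q(\x)^2\,d\x$, $|Q|_{H^1(\Omega)}^2=\int_\Omega\|\nabla Q(\x)\|_2^2\,d\x$ and $|Q|_{H^2(\Omega)}^2=\int_\Omega\|\nabla^2 Q(\x)\|_F^2\,d\x$ are integrals of squares of quantities depending linearly on $\theta$; hence any combination with nonnegative coefficients is convex, and only definiteness remains to be checked.

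For the definiteness step I would identify the kernels of the three semidefinite forms on $\mathcal{Q}$. Using that $\Omega=\mathcal{B}_2^{r}(\x_0)$ has nonempty interior and that a polynomial vanishing on a set of positive measure is the zero polynomial, $\|Q\|_{H^0(\Omega)}^2=0$ forces $Q\equiv 0$, so the $H^0$ term is already positive definite on $\mathcal{Q}$. Similarly, $|Q|_{H^1(\Omega)}^2=0$ forces $\nabla Q\equiv 0$ on $\Omega$, i.e. $\mathbf{G}=0$ and $\g=0$, so its kernel is exactly the constants; and $|Q|_{H^2(\Omega)}^2=\V_n r^n\|\mathbf{G}\|_F^2=0$ forces $\mathbf{G}=0$, so its kernel is exactly the affine functions. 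Now if (\ref{obj}) vanishes at some $Q$, then since $C_1,C_2,C_3\ge 0$ and the three terms are nonnegative, every term carrying a positive coefficient must vanish at $Q$; because $C_1+C_2+C_3>0$ at least one does, and one concludes $Q\equiv 0$ by intersecting the corresponding kernels — most transparently, when $C_1>0$ the $H^0$ kernel alone already gives $Q\equiv 0$, which in particular covers the situation of Theorem~\ref{theorem1}, where all three coefficients are strictly positive.

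The kernel computations are routine; the step that deserves a little care — and which I regard as the crux of a clean write-up — is the reduction ``strictly convex $\iff$ positive definite,'' i.e. the remark that for a homogeneous quadratic form it suffices to test (\ref{obj}) on the single vector $Q_1-Q_2$, which uses the homogeneity and bilinearity of the three functionals (equivalently, the parallelogram identity for the underlying $L^2$ inner products). A slightly more computational alternative, which I might actually present, is to read off from Theorem~\ref{theorem1} the Hessian of (\ref{obj}) with respect to $\theta$ as $2\,(C_1 M_0+C_2 M_1+C_3 M_2)$ with $M_0\succ 0$ and $M_1,M_2\succeq 0$ having the kernels just described, and then invoke the standard second-order characterisation of strict convexity.
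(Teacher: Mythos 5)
Your reduction of strict convexity to positive definiteness of the homogeneous quadratic form $F(Q)=C_1\Vert Q\Vert^2_{H^{0}(\Omega)}+C_2\vert Q\vert^2_{H^{1}(\Omega)}+C_3\vert Q\vert^2_{H^{2}(\Omega)}$ on the finite-dimensional space $\mathcal{Q}$ is the same underlying mechanism as the paper's proof, only packaged differently: the paper expands the convexity defect directly and finds it equals $(\mu-\mu^{2})F(Q_a-Q_b)$, which is precisely the ``test $F$ on the single vector $Q_a-Q_b$'' step you identify as the crux. Where you go further is in actually carrying out the definiteness check via the kernels of the three semidefinite pieces, which the paper skips entirely (it simply asserts that $(\mu-\mu^{2})F(Q_a-Q_b)>0$).

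That extra care, however, exposes a genuine problem in your concluding step. You correctly compute that the kernel of $\Vert\cdot\Vert^2_{H^{0}(\Omega)}$ on $\mathcal{Q}$ is $\{0\}$, that of $\vert\cdot\vert^2_{H^{1}(\Omega)}$ is the constants, and that of $\vert\cdot\vert^2_{H^{2}(\Omega)}$ is the affine functions. But then ``one concludes $Q\equiv 0$ by intersecting the corresponding kernels'' does not go through under the stated hypotheses $C_1,C_2,C_3\ge 0$, $C_1+C_2+C_3>0$: if $C_1=0$, the positively weighted terms are among the latter two, and the intersection of their kernels still contains every nonzero constant, so $F$ is only positive semidefinite. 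Concretely, with $C_1=0$, $Q_a=0$ and $Q_b\equiv 1$, all three quantities $F(Q_a)$, $F(Q_b)$, $F(\mu Q_a+(1-\mu)Q_b)$ vanish and strict convexity fails. So your argument is complete exactly when $C_1>0$ (which covers the $H^{2}$ norm of problem (\ref{H2example}) and the equal-weight choice used in the numerical experiments) and cannot be completed otherwise; your own kernel analysis shows the theorem is false as stated for $C_1=0$. The paper's proof has the identical hole at the final inequality. A correct statement should either assume $C_1>0$, or claim only convexity in general and strict convexity when $C_1>0$; the uniqueness assertion for problem (\ref{weightproblem}) would then need the interpolation constraints to pin down the residual affine (or constant) degrees of freedom when $C_1=0$, as in Powell's least Frobenius norm setting.
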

\begin{proof}
It holds that 
\[
\begin{aligned}
&C_{1}\Vert Q(\x)\Vert^2_{H^{0}(\Omega)}+C_{2}\vert Q(\x)\vert^2_{H^{1}(\Omega)}+C_{3}\vert Q(\x) \vert^2_{H^{2}(\Omega)}\\
=&C_1\int_{\Omega}\left\vert Q(\x)\right\vert^{2} d \x+C_2\int_{\Omega}\left\Vert \nabla Q(\x)\right\Vert^{2}_{2} d \x+C_3\int_{\Omega}\left\Vert \nabla^2 Q(\x)\right\Vert^{2}_{F} d \x,
\end{aligned}
\]
and we need to prove that the inequality
\begin{equation}\label{inequality-convex}
\begin{aligned}
&\bigg[\mu \left(C_{1}\Vert Q_a(\x)\Vert^2_{H^{0}(\Omega)}+C_{2}\vert Q_a(\x)\vert^2_{H^{1}(\Omega)}+C_{3}\vert Q_a(\x)\vert^2_{H^{2}(\Omega)}\right)\\
&+(1-\mu)\left(C_{1}\Vert Q_b(\x)\Vert^2_{H^{0}(\Omega)}+C_{2}\vert Q_b(\x)\vert^2_{H^{1}(\Omega)}+C_{3}\vert Q_b(\x)\vert^2_{H^{2}(\Omega)}\right)\bigg]\\
&-\bigg[C_1\|\mu Q_{a}(\x)+(1-\mu) Q_{b}(\x)\|^{2}_{H^{0}(\Omega)}+C_2\vert\mu Q_{a}(\x)+(1-\mu) Q_{b}(\x)\vert^{2}_{H^{1}(\Omega)}\\
&+C_3\vert\mu Q_{a}(\x)+(1-\mu) Q_{b}(\x)\vert^{2}_{H^{2}(\Omega)}\bigg]
> 0
\end{aligned}
\end{equation}
holds for any $Q_a, Q_b\in \mathcal{Q}$, and \(0<\mu<1\), \(C_1,C_2,C_3\ge 0\), \(C_1+C_2+C_3>0\).

The left hand side of inequality (\ref{inequality-convex}) can be turned to be
\begin{equation*}
\begin{aligned}
&\left(\mu-\mu^{2}\right) \bigg(C_1\int_{\Omega}\left\vert Q_{a}(\x)-Q_{b}(\x)\right\vert^{2} d \x+C_2\int_{\Omega}\left\Vert \nabla (Q_{a}(\x)- Q_{b}(\x))\right\Vert^{2}_{2} d \x\\
&+C_3\int_{\Omega}\left\Vert \nabla^2 \left(Q_{a}(\x)-Q_{b}(\x)\right)\right\Vert^{2}_{F} d \x\bigg)
> 0.
\end{aligned}
\end{equation*}
Hence,  we proved that \(\|Q(\x)\|_{H^{2}(\Omega)}^{2}\) is strictly convex as a function of $Q$.
\end{proof}

Based on Theorem \ref{strictlyConvex}, the quadratic model function \(Q(\x)\), i.e., the solution of the optimization problem 
\begin{equation}\label{H2example}
\mathop{\min}\limits_{Q\in\mathcal{Q}}\ \Vert Q-Q_{k-1}\Vert^2_{H^{2}{(\mathcal{B}^{r}_{2}(\x_{0}))}},\ 
\text{subject to}\ Q(\x)=f(\x),\ \x\in \mathcal{X}_k,
\end{equation}
is unique.

\begin{remark}
The solution of optimization problem
\begin{equation}\label{weightproblem}
 \begin{aligned}
&\ \mathop{\min}\limits_{Q\in\mathcal{Q}}\ C_1\Vert Q-Q_{k-1}\Vert^2_{H^{0}{(\mathcal{B}^{r}_{2}(\x_{0}))}}+C_2\vert Q-Q_{k-1}\vert^2_{H^{1}{(\mathcal{B}^{r}_{2}(\x_{0}))}} +C_3\vert Q-Q_{k-1}\vert^2_{H^{2}{(\mathcal{B}^{r}_{2}(\x_{0}))}}\\
&\text{subject to}\ Q(\x)=f(\x),\ \x\in \mathcal{X}_k,
\end{aligned}
\end{equation}
is unique, where $C_1,C_2,C_3\ge 0$ and $C_1+C_2+C_3>0$. 
\end{remark}

It is obviously to see that the \(H^{1}\) semi-norm of the quadratic model function on \(\mathcal{B}_{2}^{r}(\x_{0})\) refers to the average value of the first-order derivative of the quadratic model function on the region \(\mathcal{B}_{2}^{r}(\x_{0})\), and the \(H^{2}\) semi-norm on \(\mathcal{B}_{2}^{r}(\x_{0})\) refers to the average value of the second-order derivative of the quadratic model function on the region \(\mathcal{B}_{2}^{r}(\x_{0})\). In fact, the aim to minimize the interpolation error of the quadratic model function value is also important. Therefore, according to the projection theory in the following section, it is reasonable for us to include \(H^{0}\) norm, or we say \(L^{2}\) norm, in the objective function of (\ref{weightproblem}). {\color{black} Minimizing \(L^{2}\) norm can reduce the interpolation error of the function value instead of using the interpolation condition $f(\x_i)=Q(\x_i)$, and then relax the lower bound of the number of the interpolation points.} A (weighted) summation of \(H^{0}\) norm, \(H^{1}\) semi-norm and \(H^{2}\) semi-norm of \(Q(\x)\) on the region \(\mathcal{B}_{2}^{r}(\x_{0})\) indicates our aim to minimize the average function error, the average error of the first-order derivative  and the average error of the second-order derivative of the model function at the same time. We believe that in this way $Q(\x)$ will be a better model. It can reduce the lower bound of the number of the interpolation points, i.e., $m$ can be less than $n+1$. Numerical results also support our choice of $H^2$ norm.

The following two subsections will show that least $H^2$ norm updating quadratic model holds similar projection property as (\ref{Powell-proj}), and it has interpolation error upper bound locally. 

\subsection{Projection in the sense of \texorpdfstring{$H^2$}{H 2} norm}
We now introduce the projection in the sense of \(H^2\) norm, and we can obtain the similar projection  result as (\ref{Powell-proj}). We present details in the following theorem.

\begin{theorem}\label{projection}
Let \(Q_{k}\) be the solution of problem (\ref{H2example}). If $f$ is quadratic, then
\begin{equation}\label{projH}
\left\|Q_{k}-f\right\|_{H^{2}(\mathcal{B}_2^r(\x_0))}^{2}=\left\|Q_{k-1}-f\right\|_{H^{2}(\mathcal{B}_{2}^{r}(\x_0))}^{2}-\left\|Q_{k}-Q_{k-1}\right\|_{H^{2} (\mathcal{B}_2^r(\x_0))}^{2}.
\end{equation}
\end{theorem}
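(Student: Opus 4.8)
The plan is to show that the difference $Q_{k-1} - f$ decomposes orthogonally (in the $H^2$ inner product over $\mathcal{B}_2^r(\x_0)$) into $Q_{k-1} - Q_k$ plus $Q_k - f$, so that the Pythagorean identity gives exactly~\eqref{projH}. Concretely, since $f$ is quadratic, both $Q_k - f$ and $Q_{k-1} - f$ lie in $\mathcal{Q}$, and $Q_k - Q_{k-1} = (Q_{k-1} - f) - (Q_{k-1} - Q_k) \dots$; more usefully, writing $Q_{k-1} - f = (Q_{k-1} - Q_k) + (Q_k - f)$, expanding the squared $H^2$ norm of the left side yields the two squared norms on the right plus a cross term $2\langle Q_{k-1} - Q_k,\, Q_k - f\rangle_{H^2(\mathcal{B}_2^r(\x_0))}$, where $\langle\cdot,\cdot\rangle_{H^2}$ is the bilinear form associated with $\|\cdot\|_{H^2(\mathcal{B}_2^r(\x_0))}^2$ via polarization. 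So the whole proof reduces to showing this cross term vanishes.

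To kill the cross term I would invoke the variational (KKT / first-order optimality) characterization of $Q_k$ as the minimizer in~\eqref{H2example}. The feasible set $\{Q \in \mathcal{Q} : Q(\x) = f(\x),\ \x \in \mathcal{X}_k\}$ is an affine subspace of $\mathcal{Q}$ whose direction space is $\mathcal{N} := \{P \in \mathcal{Q} : P(\x) = 0,\ \x \in \mathcal{X}_k\}$. Minimizing the strictly convex quadratic $Q \mapsto \|Q - Q_{k-1}\|_{H^2(\mathcal{B}_2^r(\x_0))}^2$ over this affine set forces the gradient at the minimizer to be orthogonal to $\mathcal{N}$, i.e. $\langle Q_k - Q_{k-1},\, P\rangle_{H^2(\mathcal{B}_2^r(\x_0))} = 0$ for every $P \in \mathcal{N}$. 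Now observe that when $f$ is quadratic, $Q_k - f \in \mathcal{N}$: indeed $Q_k$ interpolates $f$ on $\mathcal{X}_k$ (it is feasible) and $f$ trivially interpolates itself, so $(Q_k - f)(\x) = 0$ for all $\x \in \mathcal{X}_k$. Hence the cross term is $-2\langle Q_k - Q_{k-1},\, Q_k - f\rangle_{H^2(\mathcal{B}_2^r(\x_0))} = 0$, and the Pythagorean identity collapses to exactly~\eqref{projH}.

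The steps, in order, are: (1) write $Q_{k-1} - f = (Q_{k-1} - Q_k) + (Q_k - f)$ and expand $\|Q_{k-1} - f\|_{H^2}^2$ using the bilinear form, isolating the cross term; (2) note that $Q_k - f$ and $Q_k - Q_{k-1}$ are both quadratics, so the $H^2$ inner product makes sense on $\mathcal{Q}$ and is a genuine (strictly positive definite, by Theorem~\ref{strictlyConvex} with $C_1 = C_2 = C_3 = 1$) inner product there; (3) state the optimality condition for~\eqref{H2example}: the minimizer $Q_k$ satisfies $\langle Q_k - Q_{k-1}, P\rangle_{H^2(\mathcal{B}_2^r(\x_0))} = 0$ for all $P$ vanishing on $\mathcal{X}_k$; (4) check $Q_k - f$ vanishes on $\mathcal{X}_k$ using feasibility of $Q_k$ and quadraticity of $f$; (5) conclude the cross term is zero and rearrange.

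The one place that needs genuine care, rather than routine computation, is step~(3): justifying the orthogonality form of the optimality condition. This is standard — minimizing a strictly convex quadratic functional over an affine subspace gives a projection, and the residual $Q_k - Q_{k-1}$ is $H^2$-orthogonal to the subspace's direction — but it should be spelled out, either by differentiating $t \mapsto \|Q_k + tP - Q_{k-1}\|_{H^2(\mathcal{B}_2^r(\x_0))}^2$ at $t = 0$ for arbitrary admissible direction $P \in \mathcal{N}$, or by quoting the KKT conditions that the paper develops later. Everything else (the expansion, the interpolation check) is immediate. I would present step~(3) via the one-variable derivative argument, since it is self-contained and does not require forward-referencing the KKT machinery of Section~\ref{section4}.
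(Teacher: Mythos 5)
Your proposal is correct and follows essentially the same route as the paper: the paper also kills the cross term via first-order optimality of $Q_k$ along the feasible perturbation $Q_\xi = Q_k + \xi(Q_k - f)$ (which lies in the feasible set precisely because $f$ is quadratic and $Q_k$ interpolates $f$ on $\mathcal{X}_k$), and then reads off~\eqref{projH} from the resulting Pythagorean expansion (evaluated at $\xi=-1$). Your phrasing in terms of $H^2$-orthogonality of $Q_k - Q_{k-1}$ to the whole direction space $\mathcal{N}$ is a mildly more general statement of the same optimality condition, specialized to the same direction $Q_k - f$.
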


\begin{proof}
At the \(k\)-th iteration, we denote \(Q_{k}+\xi \left(Q_{k}-f\right)\) as \(Q_{\xi}\),  for arbitrary \(\xi \in {\Re}\). Then \(Q_{\xi}\) is an interpolation function, and it satisfies the interpolation conditions in (\ref{H2example}). Therefore,  it holds that 
\(
\varphi(\xi)=\left\|Q_{\xi}-Q_{k-1}\right\|_{H^{2}(\mathcal{B}_2^r(\x_0))}^{2}
\) 
has the minimum value when \(\xi=0\), according to the optimality of \(Q_{k}\). In addition, we have
\begin{equation}
\label{above}
\begin{aligned}
\varphi(\xi)
=&\left\|Q_{k}+\xi \left(Q_{k}-f\right)-Q_{k-1}\right\|_{H^{2}(\mathcal{B}_2^r(\x_0))}^{2} \\
=&\xi^{2}\left\|Q_{k}-f\right\|_{H^{2}(\mathcal{B}_2^r(\x_0))}^{2} +\left\|Q_{k}-Q_{k-1}\right\|_{H^{2}(\mathcal{B}_2^r(\x_0))}^{2} \\
&+2 \xi \bigg\{\int_{\mathcal{B}_2^r(\x_0)}\left[\left(Q_{k}-Q_{k-1}\right)(\x)\right] \cdot\left[\left(Q_{k}-f\right)(\x)\right] d \x\\
&+\int_{\mathcal{B}_2^r(\x_0)}\left[\nabla(Q_{k}-Q_{k-1})(\x)\right]^{\top}\left[\nabla(Q_{k}-f)(\x)\right] d \x \\
&+\int_{\mathcal{B}_2^r(\x_0)} (1,\ldots,1)
\left[\nabla^{2}(Q_{k}-Q_{k-1})(\x)\right] \circ \left[\nabla^{2}(Q_{k}-f)(\x)\right](1,\ldots,1)^{\top} d \x\bigg\},
\end{aligned}
\end{equation}
where the symbol \(\circ\) denotes Hadamard product. 
Therefore the term in the last brace of (\ref{above}) is \(0\). Considering \(\varphi(-1)\), we can obtain the conclusion of this theorem.
\end{proof}

According to (\ref{projH}), we can obtain the projection in the sense of \(H^2\) norm
\begin{equation*}
\left\|Q_{k}(\x)-f(\x)\right\|_{H^{2}(\mathcal{B}_2^r(\x_0))} \leq\left\|Q_{k-1}(\x)-f(\x)\right\|_{H^{2}(\mathcal{B}_2^r(\x_0))}.
\end{equation*}
The model function \(Q_{k}\) has more accurate function value and derivatives than \(Q_{k-1}\), unless \(Q_{k}=Q_{k-1}\). We can directly obtain the following corollary from Theorem \ref{projection}.

\begin{corollary}
Let \(Q_{k}\) be the solution of problem (\ref{weightproblem}). If $f$ is quadratic, then 
\begin{equation*}
\begin{aligned}
&C_1\left\|Q_{k}-f\right\|_{H^{0}(\mathcal{B}_2^r(\x_0))}^{2}
+
C_2\left\vert Q_{k}-f\right\vert_{H^{1}(\mathcal{B}_2^r(\x_0))}^{2}
+
C_3\left\vert Q_{k}-f\right\vert_{H^{2}(\mathcal{B}_2^r(\x_0))}^{2}\\
=&C_1\left\|Q_{k-1}-f\right\|_{H^{0}(\mathcal{B}_2^r(\x_0))}^{2}
+
C_2\left\vert Q_{k-1}-f\right\vert_{H^{1}(\mathcal{B}_2^r(\x_0))}^{2}
+
C_3\left\vert Q_{k-1}-f\right\vert_{H^{2}(\mathcal{B}_2^r(\x_0))}^{2}\\
-&C_1\left\|Q_{k}-Q_{k-1}\right\|_{H^{0}(\mathcal{B}_2^r(\x_0))}^{2}
-
C_2\left\vert Q_{k}-Q_{k-1}\right\vert_{H^{1}(\mathcal{B}_2^r(\x_0))}^{2}
-
C_3\left\vert Q_{k}-Q_{k-1}\right\vert_{H^{2}(\mathcal{B}_2^r(\x_0))}^{2},
\end{aligned}
\end{equation*}
where $C_1,C_2,C_3\ge 0$. 
\end{corollary}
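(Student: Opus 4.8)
The plan is to repeat the argument of Theorem~\ref{projection} almost verbatim, with the single $H^2$ norm replaced by the weighted functional $\Phi(u):=C_1\|u\|_{H^0(\mathcal{B}_2^r(\x_0))}^2+C_2|u|_{H^1(\mathcal{B}_2^r(\x_0))}^2+C_3|u|_{H^2(\mathcal{B}_2^r(\x_0))}^2$, which is exactly the quantity whose value at $Q-Q_{k-1}$ is minimized in (\ref{weightproblem}). First I would fix the $k$-th iteration and, for each $\xi\in\Re$, set $Q_\xi:=Q_k+\xi(Q_k-f)$. Since $f$ is quadratic and $Q_k,Q_{k-1}\in\mathcal{Q}$, the function $Q_\xi$ is again quadratic; moreover $Q_k-f$ vanishes on $\mathcal{X}_k$ (because $Q_k$ interpolates $f$ there), so $Q_\xi$ satisfies the interpolation conditions of (\ref{weightproblem}) and is therefore feasible. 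By optimality of $Q_k$, the scalar function $\varphi(\xi):=\Phi(Q_\xi-Q_{k-1})$ attains its global minimum over $\Re$ at $\xi=0$.

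Next I would expand $\varphi$ in powers of $\xi$. Writing $Q_\xi-Q_{k-1}=(Q_k-Q_{k-1})+\xi(Q_k-f)$ and using that each of the three integrals defining $\Phi$ (the $L^2$, the gradient, and the Hessian term, exactly as displayed in (\ref{above})) is a symmetric, positive semidefinite bilinear form, one obtains $\varphi(\xi)=\xi^2\,\Phi(Q_k-f)+\varphi(0)+2\xi\,B(Q_k-Q_{k-1},\,Q_k-f)$, where $B$ is the weighted sum of the three polarized inner products. The coefficient $\Phi(Q_k-f)$ of $\xi^2$ is nonnegative, so $\xi=0$ being a global minimizer forces the linear coefficient to vanish, i.e. $B(Q_k-Q_{k-1},\,Q_k-f)=0$: if $\Phi(Q_k-f)>0$ this is the vertex condition of a parabola, and if $\Phi(Q_k-f)=0$ then $\varphi$ is affine and bounded below only when its slope is $0$.

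Finally I would evaluate at $\xi=-1$, for which $Q_{-1}=f$. On one side $\varphi(-1)=\Phi(f-Q_{k-1})$, and on the other, from the expansion with the cross term removed, $\varphi(-1)=\Phi(Q_k-f)+\Phi(Q_k-Q_{k-1})$. Equating these and spelling $\Phi$ back out into its three weighted pieces gives precisely the asserted identity.

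I do not anticipate a real obstacle. The one point worth a sentence of care is the degenerate case in which one or more of $C_1,C_2,C_3$ vanishes: then $\Phi$ is only positive semidefinite as a quadratic form on $\mathcal{Q}$ and (\ref{weightproblem}) need not have a unique minimizer, but the proof uses only that $\xi=0$ minimizes the one-dimensional quadratic $\varphi$, which holds for any minimizer $Q_k$, so uniqueness is irrelevant to the identity. As a consistency check, taking $C_1=C_2=C_3$ collapses the statement back to Theorem~\ref{projection}.
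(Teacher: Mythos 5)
Your proposal is correct and follows essentially the same route as the paper, which simply notes that the proof is analogous to that of Theorem~\ref{projection}; you have written out that analogy in full, replacing the $H^2$ norm by the weighted functional and reusing the perturbation $Q_\xi=Q_k+\xi(Q_k-f)$ together with evaluation at $\xi=-1$. Your extra remark about the degenerate case where some $C_i$ vanish (so that only the one-dimensional optimality of $\varphi$ at $\xi=0$ is needed, not uniqueness of $Q_k$) is a sensible refinement but does not change the argument.
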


\begin{proof}
The proof is similar to that of Theorem \ref{projection}.
\end{proof}

\subsection{Error analysis of the model}
The model function approximating the objective function is significant to our model-based optimization algorithm. 
We present the error analysis of the interpolation model. We give two lemmas firstly in the following. 

\begin{lemma} [Interpolation inequality]
\label{interpolationinequ}
 Assume that \(1 \leq c_1 \leq c_2 \leq c_3 \leq \infty\), and 
\(
\frac{1}{c_2}=\frac{\theta}{c_1}+\frac{(1-\theta)}{c_3}
\). Suppose also that \(u \in L^{c_1}(\Omega) \cap L^{c_3}(\Omega)\). Then \(u \in L^{c_2}(\Omega)\), and
\(
\|u\|_{L^{c_2}(\Omega)} \leq\|u\|_{L^{c_1}(\Omega)}^{\theta}\|u\|_{L^{c_3}(\Omega)}^{1-\theta}.
\)
\end{lemma}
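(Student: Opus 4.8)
The statement to prove is the classical interpolation inequality for $L^p$ spaces (the Lyapunov / log-convexity inequality), so the plan is to give the standard Hölder-based argument.

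First I would reduce to the meaningful case. If $\theta = 0$ or $\theta = 1$, or if $c_1 = c_3$, the inequality is trivial, so assume $0 < \theta < 1$ and $c_1 < c_3$, which forces $c_1 < c_2 < c_3$; in particular $c_2 < \infty$. The degenerate endpoint $c_3 = \infty$ can be handled either as a limiting case or by a direct separate computation ($\|u\|_{L^{c_2}}^{c_2} = \int |u|^{c_2} = \int |u|^{\theta c_2}\,|u|^{(1-\theta)c_2} \le \|u\|_{L^\infty}^{(1-\theta)c_2}\int|u|^{\theta c_2}$, using $\theta c_2 = c_1$), so I would dispatch it first and then assume $c_3 < \infty$ for the main line of argument.

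The key step: write $|u|^{c_2} = |u|^{\theta c_2} \cdot |u|^{(1-\theta)c_2}$ and apply Hölder's inequality on $\Omega$ with the conjugate exponents $p = \frac{c_1}{\theta c_2}$ and $q = \frac{c_3}{(1-\theta)c_2}$. One checks $\frac{1}{p} + \frac{1}{q} = \frac{\theta c_2}{c_1} + \frac{(1-\theta)c_2}{c_3} = c_2\left(\frac{\theta}{c_1} + \frac{1-\theta}{c_3}\right) = c_2 \cdot \frac{1}{c_2} = 1$ by the hypothesized relation among the exponents, and that $p, q \ge 1$ (this uses $c_1 \le c_2 \le c_3$). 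Hölder then gives
\[
\int_\Omega |u|^{c_2}\,d\x \le \left(\int_\Omega |u|^{\theta c_2 p}\,d\x\right)^{1/p}\left(\int_\Omega |u|^{(1-\theta)c_2 q}\,d\x\right)^{1/q} = \left(\int_\Omega |u|^{c_1}\,d\x\right)^{\theta c_2/c_1}\left(\int_\Omega |u|^{c_3}\,d\x\right)^{(1-\theta)c_2/c_3},
\]
since $\theta c_2 p = c_1$ and $(1-\theta) c_2 q = c_3$. Raising both sides to the power $1/c_2$ yields $\|u\|_{L^{c_2}(\Omega)} \le \|u\|_{L^{c_1}(\Omega)}^{\theta}\|u\|_{L^{c_3}(\Omega)}^{1-\theta}$, which also shows $u \in L^{c_2}(\Omega)$ given $u \in L^{c_1}(\Omega) \cap L^{c_3}(\Omega)$.

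I do not expect a genuine obstacle here — the only things demanding care are the bookkeeping that the constructed Hölder exponents are indeed conjugate and both at least $1$ (which is exactly where the ordering assumption $1 \le c_1 \le c_2 \le c_3$ is used), and the handling of the $c_3 = \infty$ endpoint, where one cannot literally take $q = \frac{c_3}{(1-\theta)c_2}$ and must argue separately or by continuity. Beyond that the proof is a one-line application of Hölder's inequality.
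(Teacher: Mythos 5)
Your proof is correct and is exactly the standard H\"older-based argument; the paper itself does not prove this lemma but merely cites the book of Evans, and your write-up (including the check that the constructed exponents are conjugate and at least $1$, and the separate treatment of the $c_3=\infty$ endpoint) is precisely the proof given there. Nothing is missing.
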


\begin{proof}
Proof details are in the book of Evans \cite{evans10}.
\end{proof}

\begin{definition}[The Sobolev space]
The Sobolev space $\WW^{k, p}(\Omega)$ 
consists of all locally summable functions $u: \Omega \rightarrow \Re$ such that for each multiindex $\alpha$ with $\vert\alpha\vert \leq k, D^\alpha u$ exists in the weak sense and belongs to $L^p(\Omega)$\footnote{More details of the locally summable functions and the weak derivatives can be seen in the book of Evans \cite{evans10}}. If $p=2$, we usually write $
\HH^k(\Omega)=\WW^{k, 2}(\Omega). 
$ 
\end{definition}

\begin{lemma}\label{normlemma}
Suppose that $u\in \HH^1(\Omega)$, and 
\(
\left\vert \partial_{i} u\right\vert  \leq M_{1}, \  i=1,\ldots, n
\). 
For \(\forall\ \y\in \Omega:=\mathcal{B}_{2}^{r}(\x_0)\), it holds that
\begin{equation}\label{1lemma}
\begin{aligned}
\vert u(\y)\vert \le &\V_{n}^{-\frac{1}{2}}  r^{-\frac{n}{2}}  \|u\|_{L^{2}(\Omega)}+\V_{n}^{-\frac{1}{p}}r^{\frac{n}{q}+1-n} n^{\frac{1}{q}-\frac{1+\theta}{2}}\left(n+q-nq\right)^{-\frac{1}{q}} M_{1}^{1-\theta}\vert u\vert_{H^{1}(\Omega)}^{\theta},
\end{aligned}
\end{equation}
where \(\frac{1}{p}+\frac{1}{q}=1\), \(\theta=\frac{2}{p} \leq 1\), \(p>n\), {\color{black}
\(
\mathcal{B}_2^{r}(\x_{0}) = \left \{ \x \in {\Re}^{n}: \left\|\x-\x_{0}\right\|_2 \leq r \right\},
\) 
and 
\(\V_{n}\) denotes the volume of the $n$-dimensional $\ell_2$ unit ball  \(\mathcal{B}_2^{1}(\x_{0})\)}.
\end{lemma}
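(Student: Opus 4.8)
The plan is to write $u(\y)$ as its average over $\Omega$ plus a remainder that is controlled by the gradient, and to estimate the two pieces separately; the first piece produces the $L^2$ term in \eqref{1lemma} and the second produces the $H^1$ term via a Morrey-type inequality combined with Lemma~\ref{interpolationinequ}.

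First I would handle the average. Set $\bar u = (\V_n r^n)^{-1}\int_\Omega u(\x)\,d\x$, which is legitimate since $|\Omega| = \V_n r^n$. The Cauchy--Schwarz inequality gives $|\bar u| \le (\V_n r^n)^{-1}(\V_n r^n)^{1/2}\|u\|_{L^2(\Omega)} = \V_n^{-1/2} r^{-n/2}\|u\|_{L^2(\Omega)}$, which is exactly the first term on the right-hand side of \eqref{1lemma}. It remains to bound $|u(\y)-\bar u|$ by the second term. For this I would use the standard pointwise representation obtained by integrating $u$ along segments: for $\z \in \Omega$, the fundamental theorem of calculus along the segment from $\z$ to $\y$ (both lying in the convex set $\Omega$) gives $u(\y)-u(\z) = \int_0^1 \nabla u(\z+t(\y-\z))\cdot(\y-\z)\,dt$; averaging over $\z\in\Omega$ and passing to polar coordinates about $\y$ yields a Morrey-type bound of the form $|u(\y)-\bar u| \le c_n\int_\Omega |\y-\z|^{-(n-1)}\|\nabla u(\z)\|_2\,d\z$ with an explicit dimensional constant $c_n$ (it is $(n\V_n)^{-1}$ for $\y=\x_0$, and for a general $\y\in\Omega$ one extends the integral to the enclosing ball centered at $\y$).

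Next I would apply H\"older's inequality with the conjugate pair $p,q$, writing $\int_\Omega |\y-\z|^{-(n-1)}\|\nabla u(\z)\|_2\,d\z \le \big(\int_\Omega \|\nabla u(\z)\|_2^{p}\,d\z\big)^{1/p}\big(\int_\Omega |\y-\z|^{-(n-1)q}\,d\z\big)^{1/q}$. The hypothesis $p>n$ is precisely what makes the exponent $n-1-(n-1)q = n+q-nq-1$ strictly larger than $-1$ (equivalently $n+q-nq>0$), so the singular integral converges; evaluating it in polar coordinates over a ball of radius $r$ produces a factor proportional to $r^{(n+q-nq)/q}(n+q-nq)^{-1/q} = r^{n/q+1-n}(n+q-nq)^{-1/q}$ together with a power of $n\V_n$. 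For the remaining factor I would invoke Lemma~\ref{interpolationinequ} with $c_1=2$, $c_2=p$, $c_3=\infty$, $\theta=2/p$, applied to the function $\|\nabla u(\cdot)\|_2$: since $\big(\int_\Omega \|\nabla u\|_2^2\big)^{1/2} = |u|_{H^1(\Omega)}$ and, by $|\partial_i u|\le M_1$, the $L^\infty$ norm of $\|\nabla u(\cdot)\|_2$ is at most $\sqrt n\,M_1$, this yields $\big(\int_\Omega \|\nabla u\|_2^{p}\big)^{1/p} \le |u|_{H^1(\Omega)}^{\theta}(\sqrt n\,M_1)^{1-\theta} = n^{(1-\theta)/2}M_1^{1-\theta}|u|_{H^1(\Omega)}^{\theta}$. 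Multiplying the three estimates and collecting the powers of $n$, of $\V_n$ (using $1/q-1=-1/p$), and of $r$ reproduces the coefficient $\V_n^{-1/p}r^{n/q+1-n}n^{1/q-(1+\theta)/2}(n+q-nq)^{-1/q}M_1^{1-\theta}$, which completes the bound.

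The main obstacle I anticipate is the Morrey-type step: obtaining the pointwise inequality $|u(\y)-\bar u|\le c_n\int_\Omega |\y-\z|^{1-n}\|\nabla u(\z)\|_2\,d\z$ with a sufficiently sharp explicit constant, and then doing the dimensional bookkeeping carefully --- the several powers of $n$ and of $\V_n$ coming from the sphere measure, the averaging constant, the H\"older step, and the interpolation step must telescope exactly into $n^{1/q-(1+\theta)/2}\V_n^{-1/p}$. Everything else (the Cauchy--Schwarz bound on the mean, the polar-coordinate evaluation of the kernel integral, and the application of Lemma~\ref{interpolationinequ}) is routine once $p>n$ is used to guarantee integrability of the singular kernel.
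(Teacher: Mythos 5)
Your proposal is correct and follows essentially the same route as the paper's proof: decompose $u(\y)$ into an average plus a deviation, bound the average by Cauchy--Schwarz to get the $L^2$ term, and bound the deviation by the Morrey-type kernel estimate followed by H\"older with exponents $p,q$, the polar-coordinate evaluation of $\int\|\y-\x\|_2^{-(n-1)q}$, and Lemma~\ref{interpolationinequ} applied to $\|\nabla u\|_2$ with $(c_1,c_2,c_3)=(2,p,\infty)$. The only cosmetic difference is that the paper averages over the ball $\mathcal{B}_2^r(\y)$ centered at the evaluation point (after extending $u$ by zero) rather than over $\Omega$ itself; the constants telescope identically either way.
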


\begin{proof}
We extend the function $u$ outside the region $\Omega$, such that $u(\y)=0$ for any \(\y \notin \mathcal{B}_{2}^{r}\left(\x_0\right)\). For any \(\y \in \mathcal{B}_{2}^{r}\left(\x_0\right)\), the following conclusion holds.
\begin{equation}
\label{upperbound}
\begin{aligned}
\left\vert u(\y) \right\vert & \leq \left\vert u(\y)-\frac{1}{\left\vert \mathcal{B}_{2}^{r}(\y)\right\vert} \int_{\mathcal{B}_{2}^{r}(\y)} u(\x) d \x\right\vert+\frac{1}{\left\vert \mathcal{B}_{2}^{r}(\y)\right\vert} \left\vert\int_{\mathcal{B}_{2}^{r}(\y)} u(\x) d \x \right\vert \\
& \leq \frac{1}{\left\vert\mathcal{B}_{2}^{r}(\y)\right\vert}
\left(\int_{\mathcal{B}_{2}^{r}(\y)}\left\vert u(\y)-u(\x)\right\vert d \x+\left\vert\int_{\mathcal{B}_{2}^r(\y)} u(\x) d \x \right\vert\right),
\end{aligned}
\end{equation}
where \(\left\vert \mathcal{B}_{2}^{r}(\y)\right\vert\) denotes the volume of the ball \( \mathcal{B}_{2}^{r}(\y)\). Besides, there are upper bounds for the two terms appearing in the right hand side of (\ref{upperbound}). Based on H\"{o}lder's inequality, we can obtiain
\begin{equation*}
\begin{aligned}
\left\vert\int_{\mathcal{B}_{2} ^{r}(\y)} u(\x) d \x \right\vert & \leq \left(\int_{\mathcal{B}_{2} ^{r}(\y)}(u(\x))^{2} d \x\right)^{\frac{1}{2}} \left(\int_{\mathcal{B}_{2}^{r}(\y)} 1^{2} d \x\right)^{\frac{1}{2}}\\
&\leq \left\vert\mathcal{B}_{2}^{r}(\y)\right\vert^{\frac{1}{2}}  \left\Vert u\right\Vert_{L^2(\Omega)}\\
&=\V_{n}^{\frac{1}{2}}r^{\frac{n}{2}}  \left\Vert u\right\Vert_{L^2(\Omega)}.
\end{aligned}
\end{equation*}
According to the proof of Morrey's inequality in the book of Evans \cite{evans10}, it holds that
\begin{equation*}
\begin{aligned}
\int_{\mathcal{B}_{2}^{r}(\y)}\left\vert u(\y)-u(\x)\right\vert d \x 
\leq & \frac{r^{n}}{n} \int_{\mathcal{B}_{2}^{r}(\y)} \frac{\left\Vert\nabla u(\x)\right\Vert_{2}}{\left\Vert \y-\x\right\Vert_{2}^{n-1}} d \x \\
\leq & \frac{r^{n}}{n}\left(\int_{\mathcal{B}_{2}^{r}(\y)}\left\Vert\nabla u(\x)\right\Vert_{2}^{p} d \x\right)^{\frac{1}{p}}
\left(\int_{\mathcal{B}_{2}^{r}(\y)} \frac{1}{\left\Vert \y-\x\right\Vert_{2}^{(n-1)q}} d \x\right)^{\frac{1}{q}},
\end{aligned}
\end{equation*}
where \(\frac{1}{p}+\frac{1}{q}=1\), and \((n-1)(q-1)\in(0,1)\), which always holds if $p>n$. 
\begin{equation*}
 \begin{aligned}
\left(\int_{\mathcal{B}_{2}^{r}(\y)} \frac{1}{\left\Vert \y-\x\right\Vert_{2}^{(n-1)q}} d \x\right)^{\frac{1}{q}}
=&\left(\int_{\mathcal{B}_{2}^{r}(\mathbf{0})} \frac{1}{\z^{(n-1)q}} d \z\right)^{\frac{1}{q}} \\
=&\V_{n}^{\frac{1}{q}}n^{\frac{1}{q}}\left(n+q-nq\right)^{-\frac{1}{q}}r^{\frac{n}{q}+1-n}.
 \end{aligned}
\end{equation*}
Besides, Lemma \ref{interpolationinequ} helps us to obtain the result that
\begin{align*}
\left(\int_{\mathcal{B}_{2}^{r}(\y)} \Vert \nabla u(\x)\Vert_{2}^{p} d \x\right)^{\frac{1}{p}} 
=&\left\Vert \left(\sum_{i=1}^n\vert \partial_i u(\x)\vert^2\right)^{\frac{1}{2}}\right\Vert_{L^p(\mathcal{B}_{2}^{r}(\y))}\\
\leq &\left\Vert \left(\sum_{i=1}^n\vert \partial_i u(\x)\vert^2\right)^{\frac{1}{2}}\right\Vert_{L^p(\Omega)}\\
\leq& \left\Vert \left(\sum_{i=1}^n\vert \partial_i u(\x)\vert^2\right)^{\frac{1}{2}}\right\Vert_{L^2(\Omega)}^{\theta}\left\Vert \left(\sum_{i=1}^n\vert \partial_i u(\x)\vert^2\right)^{\frac{1}{2}}\right\Vert_{L^\infty(\Omega)}^{1-\theta} \\
\leq& n^{\frac{1-\theta}{2}}\left\vert u\right\vert_{H^1(\Omega)}^{\theta}M_1^{1-\theta}, 
\end{align*}
where \(\theta=\frac{2}{p} \leq 1\), and \(p>n\).
Then it holds that
\begin{equation*}
\begin{aligned}
\int_{\mathcal{B}_{2}^{r}(\y)}\left\vert u(\y)-u(\x)\right\vert d \x
\leq &\frac{r^{n}}{n}n^{\frac{1-\theta}{2}}\left\vert u\right\vert_{H^1(\Omega)}^{\theta}M_1^{1-\theta}\V_{n}^{\frac{1}{q}}n^{\frac{1}{q}}\left(n+q-nq\right)^{-\frac{1}{q}}r^{\frac{n}{q}+1-n}\\
=&\V_{n}^{\frac{1}{q}}{r^{\frac{n}{q}+1}}n^{\frac{1}{q}-\frac{1+\theta}{2}}\left(n+q-nq\right)^{-\frac{1}{q}}M_1^{1-\theta}\left\vert u\right\vert_{H^1(\Omega)}^{\theta}.
\end{aligned}
\end{equation*}
Hence we have
\begin{equation*}
\begin{aligned}
\vert u(\y)\vert 
\le &\frac{1}{\V_{n} r^{n}}\left[{r^{\frac{n}{q}+1}}n^{\frac{1}{q}-\frac{1+\theta}{2}}\left(n+q-nq\right)^{-\frac{1}{q}}\vert u\vert_{H^{1}(\Omega)}^{\theta}M_1^{1-\theta}\V_{n}^{\frac{1}{q}}+\V_{n}^{\frac{1}{2}}  r^{\frac{n}{2}}  \|u\|_{L^{2}(\Omega)}\right]\\
=&\V_{n}^{-\frac{1}{2}}  r^{-\frac{n}{2}}  \|u\|_{L^{2}(\Omega)}+\V_{n}^{\frac{1}{q}-1}r^{\frac{n}{q}+1-n} n^{\frac{1}{q}-\frac{1+\theta}{2}}\left(n+q-nq\right)^{-\frac{1}{q}} M_{1}^{1-\theta}\vert u\vert_{H^{1}(\Omega)}^{\theta}.
\end{aligned}
\end{equation*}

Therefore, (\ref{1lemma}) holds.
\end{proof}

The following theorem illustrates the relationship between the $H^2$ norm of the function in a given region and the absolute value of the function value and gradient norm at a point.

\begin{theorem}\label{THM-error}
Let {\color{black}\(u \in \HH^{2}(\Omega)\)}, where \(\Omega=\mathcal{B}_2^{r}\left(\x_{0}\right)\). Suppose there exist \(M_1, M_2\) such that
\(
\left\vert \partial_{i} u\right\vert  \leq M_{1}, \ 
\left\vert \partial_{i j}^{2} u\right\vert  \leq M_{2}, \  i, j=1,\ldots, n,
\) 
then, for $\forall\ x \in \Omega$, we have
\begin{align}
&\vert u(\x)\vert\le \V_{n}^{-\frac{1}{2}}  r^{-\frac{n}{2}}  \|u\|_{L^{2}(\Omega)}+\V_{n}^{-\frac{1}{p}}r^{\frac{n}{q}+1-n} n^{\frac{1}{q}-\frac{1+\theta}{2}}\left(n+q-nq\right)^{-\frac{1}{q}} M_{1}^{1-\theta}\vert u\vert_{H^{1}(\Omega)}^{\theta},\label{conclusion-1}\\
&\|\nabla u(\x)\|_{2}\le n^{\frac{1}{2}}\V_{n}^{-\frac{1}{2}}  r^{-\frac{n}{2}}  \vert u\vert_{H^{1}(\Omega)}+\V_{n}^{-\frac{1}{p}}r^{\frac{n}{q}+1-n} n^{\frac{1}{q}-\frac{\theta}{2}}\left(n+q-nq\right)^{-\frac{1}{q}} M_{2}^{1-\theta}\vert u\vert_{H^{2}(\Omega)}^{\theta},\label{conclusion-2}
\end{align}
where \(\frac{1}{p}+\frac{1}{q}=1\), \(\theta=\frac{2}{p} \leq 1\), \(p>n\), {\color{black}
\(
\mathcal{B}_2^{r}(\x_{0}) = \left \{ \x \in {\Re}^{n}: \left\|\x-\x_{0}\right\|_2 \leq r \right\},
\) 
and 
\(\V_{n}\) denotes the volume of the $n$-dimensional $\ell_2$ unit ball  \(\mathcal{B}_2^{1}(\x_{0})\)}.
\end{theorem}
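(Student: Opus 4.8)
The plan is to derive both inequalities directly from Lemma~\ref{normlemma}, which already contains the only nontrivial (Morrey-type) estimate. Inequality (\ref{conclusion-1}) requires no additional work: since $u\in\HH^2(\Omega)\subseteq\HH^1(\Omega)$ and $\vert\partial_i u\vert\le M_1$ by hypothesis, Lemma~\ref{normlemma} applied to $u$ itself yields (\ref{conclusion-1}) verbatim (recalling $\V_n^{1/q-1}=\V_n^{-1/p}$ since $\frac1p+\frac1q=1$).

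For (\ref{conclusion-2}) I would fix $\x\in\Omega$ and start from the elementary estimate
\[
\|\nabla u(\x)\|_2=\Big(\sum_{i=1}^n\vert\partial_i u(\x)\vert^2\Big)^{\frac12}\le \sqrt{n}\,\max_{1\le i\le n}\vert\partial_i u(\x)\vert .
\]
Because $u\in\HH^2(\Omega)$, every partial derivative $\partial_i u$ belongs to $\HH^1(\Omega)$, and $\vert\partial_j(\partial_i u)\vert=\vert\partial^2_{ij}u\vert\le M_2$ for all $j$. Hence Lemma~\ref{normlemma} applies with $\partial_i u$ playing the role of $u$ and $M_2$ playing the role of $M_1$, giving for each $i$
\[
\vert\partial_i u(\x)\vert\le \V_n^{-\frac12}r^{-\frac n2}\|\partial_i u\|_{L^2(\Omega)}+\V_n^{-\frac1p}r^{\frac nq+1-n}n^{\frac1q-\frac{1+\theta}{2}}(n+q-nq)^{-\frac1q}M_2^{1-\theta}\vert\partial_i u\vert_{H^1(\Omega)}^{\theta}.
\]

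To make this bound independent of $i$ I would use the domination relations $\|\partial_i u\|_{L^2(\Omega)}^2\le\sum_{j}\|\partial_j u\|_{L^2(\Omega)}^2=\vert u\vert_{H^1(\Omega)}^2$ and $\vert\partial_i u\vert_{H^1(\Omega)}^2=\int_\Omega\sum_{j}\vert\partial^2_{ij}u\vert^2\,d\x\le\int_\Omega\sum_{j,k}\vert\partial^2_{jk}u\vert^2\,d\x=\vert u\vert_{H^2(\Omega)}^2$. Substituting these into the per-component inequality, taking the maximum over $i$, and multiplying by $\sqrt n$ then produces (\ref{conclusion-2}) after collecting powers of $n$: the first term acquires the stated factor $n^{1/2}$, while in the second term $n^{1/2}\cdot n^{1/q-(1+\theta)/2}=n^{1/q-\theta/2}$, exactly the exponent in the statement.

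The main point to be careful about is the constant bookkeeping rather than any genuine obstacle: one must use the crude bound $\|\nabla u(\x)\|_2\le\sqrt n\,\max_i\vert\partial_i u(\x)\vert$ (the $\ell_2$ triangle inequality would give a different, smaller constant that does not match the claimed exponents of $n$ and $\V_n$), and one must apply the domination estimates $\|\partial_i u\|_{L^2(\Omega)}\le\vert u\vert_{H^1(\Omega)}$ and $\vert\partial_i u\vert_{H^1(\Omega)}\le\vert u\vert_{H^2(\Omega)}$ in the right places so that the resulting bound is uniform in $i$ before the maximum is taken.
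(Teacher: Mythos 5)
Your proposal is correct and follows essentially the same route as the paper's own proof: inequality (\ref{conclusion-1}) is read off directly from Lemma~\ref{normlemma}, and (\ref{conclusion-2}) is obtained by applying that lemma componentwise to each $\partial_i u$ (with $M_2$ in place of $M_1$), dominating $\|\partial_i u\|_{L^2(\Omega)}$ by $\vert u\vert_{H^1(\Omega)}$ and $\vert\partial_i u\vert_{H^1(\Omega)}$ by $\vert u\vert_{H^2(\Omega)}$, and then using $\|\nabla u(\x)\|_2\le\sqrt n\,\max_i\vert\partial_i u(\x)\vert$. Your bookkeeping of the exponents of $n$ and $\V_n$ matches the paper's, and you make explicit the two domination steps the paper leaves implicit.
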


\begin{proof}
 Lemma \ref{normlemma} directly proves (\ref{conclusion-1}). For \(\forall\ \x\in \Omega:=\mathcal{B}_{2}^{r}(\x_0)\), it holds that
\begin{equation*}
\vert \partial_i u(\x)\vert \le \V_{n}^{-\frac{1}{2}}  r^{-\frac{n}{2}}  \vert u\vert_{H^{1}(\Omega)}+\V_{n}^{-\frac{1}{p}}r^{\frac{n}{q}+1-n} n^{\frac{1}{q}-\frac{1+\theta}{2}}\left(n+q-nq\right)^{-\frac{1}{q}} M_{2}^{1-\theta}\vert u\vert_{H^{2}(\Omega)}^{\theta},
\end{equation*}
where \(\frac{1}{p}+\frac{1}{q}=1\), \(\theta=\frac{2}{p} \leq 1\), and \(p>n\).
Therefore, it follows that
\[
\begin{aligned}
\|\nabla u\|_{2}
&\le {n}^{\frac{1}{2}}\max_{i=1,\ldots,n}\left\|\partial_{i} u\right\|_{L^{\infty}(\Omega)}\\
\le & n^{\frac{1}{2}}\left[\V_{n}^{-\frac{1}{2}}  r^{-\frac{n}{2}}  \vert u\vert_{H^{1}(\Omega)}+\V_{n}^{-\frac{1}{p}}r^{\frac{n}{q}+1-n} n^{\frac{1}{q}-\frac{1+\theta}{2}}\left(n+q-nq\right)^{-\frac{1}{q}} M_{2}^{1-\theta}\vert u\vert_{H^{2}(\Omega)}^{\theta}\right],
\end{aligned}
\]
which gives (\ref{conclusion-2}). 
\end{proof}

According to Theorem \ref{THM-error}, the following corollary holds naturally. It can be observed that reducing the $H^2$ norm of the function $u$ will also reduce the Euclidean norms of the function value and  gradient vector of $u$, which gives the relationship of the norm based on integral in some region and the norm at given point.

\begin{corollary}\label{Q-ferror}
Given the objective function \(f \in \C^{3}(\mathcal{B}_2^r(\x_0))\) and its quadratic model function \(Q\). Suppose that \(
\left\vert \partial_{i} (Q-f)\right\vert  \leq M_{1},\ 
\left\vert \partial_{i j}^{2} (Q-f)\right\vert  \leq M_{2}  
\),  
for \(i,j=1, \ldots, n\). Then, for $\forall\ \x \in \mathcal{B}_2^r(\x_0)$, it holds that
\begin{align*}
\vert Q(\x)-f(\x)\vert
\le& \V_{n}^{-\frac{1}{2}}  r^{-\frac{n}{2}}  \|Q-f\|_{L^{2}(\Omega)}\\
& +\V_{n}^{-\frac{1}{p}}r^{\frac{n}{q}+1-n} n^{\frac{1}{q}-\frac{1+\theta}{2}}\left(n+q-nq\right)^{-\frac{1}{q}} M_{1}^{1-\theta}\vert Q-f\vert_{H^{1}(\Omega)}^{\theta},\\
\|\nabla Q(\x)-\nabla f(\x)\|_{2}
\le& 
n^{\frac{1}{2}}\V_{n}^{-\frac{1}{2}}  r^{-\frac{n}{2}}  \vert Q-f\vert_{H^{1}(\Omega)}\\
&+\V_{n}^{-\frac{1}{p}}r^{\frac{n}{q}+1-n} n^{\frac{1}{q}-\frac{\theta}{2}}\left(n+q-nq\right)^{-\frac{1}{q}} M_{2}^{1-\theta}\vert Q-f\vert_{H^{2}(\Omega)}^{\theta},
\end{align*}
where \(\frac{1}{p}+\frac{1}{q}=1\), \(\theta=\frac{2}{p} \leq 1\), \(p>n\), {\color{black}
\(
\mathcal{B}_2^{r}(\x_{0}) = \left \{ \x \in {\Re}^{n}: \left\|\x-\x_{0}\right\|_2 \leq r \right\},
\) 
and 
\(\V_{n}\) denotes the volume of the $n$-dimensional $\ell_2$ unit ball  \(\mathcal{B}_2^{1}(\x_{0})\)}.
\end{corollary}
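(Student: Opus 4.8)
The plan is to derive the corollary as an immediate specialization of Theorem \ref{THM-error} to the function $u = Q - f$. First I would check the hypothesis $u \in \HH^{2}(\Omega)$ that Theorem \ref{THM-error} requires, where $\Omega = \mathcal{B}_2^r(\x_0)$: the model $Q$ is a quadratic polynomial, hence $\C^{\infty}$, and $f \in \C^{3}(\mathcal{B}_2^r(\x_0))$ with $\Omega$ a bounded (closed) set, so every classical partial derivative of $f$ of order at most $2$ is continuous, hence bounded and square-integrable on $\Omega$; the same then holds for $u = Q - f$, whose weak derivatives coincide with its classical ones, so $u \in \HH^{2}(\Omega)$ and the quantities $\Vert u\Vert_{L^{2}(\Omega)}$, $\vert u\vert_{H^{1}(\Omega)}$, $\vert u\vert_{H^{2}(\Omega)}$ are all finite.

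Next I would observe that the standing assumptions of the corollary, namely $\left\vert \partial_{i}(Q-f)\right\vert \le M_{1}$ and $\left\vert \partial_{ij}^{2}(Q-f)\right\vert \le M_{2}$ for $i,j = 1,\dots,n$, are exactly the bounds $\left\vert \partial_{i} u\right\vert \le M_{1}$ and $\left\vert \partial_{ij}^{2} u\right\vert \le M_{2}$ appearing in the statement of Theorem \ref{THM-error}. Applying inequalities (\ref{conclusion-1}) and (\ref{conclusion-2}) of that theorem to $u = Q - f$, and using $\nabla u = \nabla Q - \nabla f$, $\Vert u\Vert_{L^{2}(\Omega)} = \Vert Q - f\Vert_{L^{2}(\Omega)}$, $\vert u\vert_{H^{1}(\Omega)} = \vert Q - f\vert_{H^{1}(\Omega)}$, and $\vert u\vert_{H^{2}(\Omega)} = \vert Q - f\vert_{H^{2}(\Omega)}$, reproduces the two displayed inequalities of the corollary verbatim, with the same auxiliary parameters $p, q, \theta$ satisfying $\frac{1}{p} + \frac{1}{q} = 1$, $\theta = \frac{2}{p} \le 1$, $p > n$.

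I do not expect any genuine obstacle here: the substance is already contained in Theorem \ref{THM-error}, and the corollary merely transfers those quantitative estimates to the model error once $Q - f$ is known to lie in the right function space. The only point deserving some care is the regularity bookkeeping in the first step --- verifying that $\C^{3}$ (indeed $\C^{2}$ would already suffice) smoothness of $f$ on the closed ball makes Theorem \ref{THM-error} legitimately applicable, i.e.\ guarantees both the membership $Q - f \in \HH^{2}(\Omega)$ and the existence of the finite pointwise bounds $M_{1}, M_{2}$ --- after which the conclusion follows by a one-line substitution.
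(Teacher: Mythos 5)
Your proposal is correct and matches the paper's own argument, which likewise obtains the corollary by substituting $u = Q - f$ into Theorem \ref{THM-error}. The extra regularity bookkeeping you include (verifying $Q - f \in \HH^{2}(\Omega)$ from $f \in \C^{3}$ and $Q$ polynomial) is a welcome elaboration of a step the paper leaves implicit, but it does not change the route.
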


\begin{proof}
It is easy to see that the corollary is true by substituting $u$ by $Q-f$ in Theorem \ref{THM-error}. 
\end{proof}

The error analysis in Corollary \ref{Q-ferror} theoretically supports the approximation property of least $H^2$ norm updating quadratic model. Therefore, obtaining the model function by solving subproblem (\ref{H2example}) or subproblem (\ref{weightproblem}) can relax the minimum requirement of the number of interpolation conditions, and it allows us to use fewer interpolation points. At the same time, the relaxed constraint, i.e., making the $H^2$ norm (with weight coefficients) of \(Q-f\) smaller, can still keep good approximation property and error bound.

\section{Least \texorpdfstring{$H^2$}{H 2} Norm Updating Quadratic Model\label{section4}}

In this section, we propose the method obtaining the least $H^2$ norm updating quadratic model according to KKT conditions. Theorem \ref{theorem1} and its proof inspire us to obtain parameters of the quadratic model function at the $k$-th step by solving problem
\begin{equation}
\label{lagobj}
\begin{aligned}
\min_{c,g,\mathbf{G}}\ &\eta_{1}\|\mathbf{G}\|_{F}^{2}+\eta_{2} \| \g\|_{2}^{2}+\eta_{3}(\operatorname{Tr} (\mathbf{G}))^{2}+\eta_{4} \operatorname{Tr} (\mathbf{G})c+\eta_5c^{2}\\
\text{subject to}\ &c+\g^{\top}\left(\x_{i}-\x_{0}\right)+\frac{1}{2}\left(\x_{i}-\x_{0}\right)^{\top} \mathbf{G} \left(\x_{i}-\x_{0}\right)=f(\x_{i})-Q_{k-1}(\x_i), i=1, \ldots, m,
\end{aligned}
\end{equation}
where \(\mathbf{G}^{\top}=\mathbf{G}\), and the solution of (\ref{lagobj}) are the coefficients of the difference function of the model functions, i.e., $Q_k-Q_{k-1}$. The radius $r$ for calculating $H^2$ norm will be given in Section \ref{section7} for implementation.  Points $\x_1,\ldots,\x_m$ denote the interpolation points at the $k$-th iteration for simplicity. 
We directly consider the generated weighted objective function with weight coefficients $C_1$, $C_2$ and $C_3$. 
Parameters \(\eta_1,\eta_2,\eta_3,\eta_4,\eta_5\) satisfy that
\begin{equation}\label{eta}
\left\{
\begin{aligned}
\eta_1&=C_1\frac{r^{4}}{2(n+4)(n+2)}+C_2\frac{r^{2}}{n+2}+C_3,\
\eta_2=C_1\frac{r^{2}}{n+2}+C_2,\\ 
\eta_3&=C_1\frac{r^{4}}{4(n+4)(n+2)},\ 
\eta_4=C_1\frac{r^{2}}{n+2},\ 
\eta_5=C_1.
\end{aligned}
\right.
\end{equation}

\begin{sloppypar}
Then the Lagrange function is 
\begin{equation}
\begin{aligned}\label{lagfunction}
&\mathcal{L}(c, \g, \mathbf{G})  
= \eta_{1}\|\mathbf{G}\|_{F}^{2}+\eta_{2}\|\g\|_{2}^{2}+\eta_{3}(\operatorname{Tr} (\mathbf{G}))^{2} +\eta_{4}\operatorname{Tr} (\mathbf{G})c+\eta_5c^{2} \\
 &\ \ -\sum_{l=1}^{m} \lambda_{l}\left[c+\g^{\top}\left(\x_{l}-\x_{0}\right)+\frac{1}{2}\left(\x_{l}-\x_{0}\right)^{\top} \mathbf{G} \left(\x_{l}-\x_{0}\right) {-f(\x_l)}+Q_{k-1}(\x_l) \right].
\end{aligned}
\end{equation}
Let \(T\) denotes \(\operatorname{Tr} (\mathbf{G})\). KKT conditions of problem (\ref{lagobj}) include that
\begin{align}
0=\frac{\partial \LL(c,\g,\mathbf{G})}{\partial c}&=2 \eta_5c+\eta_4T-\sum_{l=1}^{m} \lambda_{l}, \label{KKT-1}\\
\mathbf{0}_{n}=\frac{\partial \LL(c,\g,\mathbf{G})}{\partial \g}&=2 \eta_{2} \g-\sum_{l=1}^{m} \lambda_l \left(\x_{l}-\x_{0}\right),\label{KKT-2}
\end{align}
where $\mathbf{0}_n=(0,\ldots,0)^{\top}\in{\Re}^{n}$, and the other equations of KKT conditions come from the following. Differentiating \(\LL(c, \g, \mathbf{G})\)  with respect to the elements of \(\mathbf{G}\), we can obtain that
\begin{equation*}
\begin{aligned}
2 \eta_{1} \mathbf{G}-\frac{1}{2}\sum_{l=1}^{m} \lambda_{l}\left(\x_{l}-\x_{0}\right)\left(\x_{l}-\x_{0}\right)^{\top} +2 \eta_{3} \operatorname{Diag}\left\{T, \ldots, T\right\}+\eta_{4}c \mathbf{I}=\mathbf{0}_{nn},
\end{aligned}
\end{equation*}
where $\mathbf{I}\in{\Re}^{n\times n}$ is the identity matrix, and $\mathbf{0}_{nn}\in{\Re}^{n\times n}$ is the zero matrix. Thus 
\begin{equation}
\label{Q(x)G}
\begin{aligned}
2 \eta_{1} \mathbf{G} & =\frac{1}{2}\sum_{l=1}^{m} \lambda_{l}\left(\x_{l}-\x_{0}\right)\left(\x_{l}-\x_{0}\right)^{\top} -\left(2\eta_{3} T+\eta_{4}c\right) \mathbf{I}.
\end{aligned}
\end{equation} 
\end{sloppypar}

We let \(\mathbf{G}\) multiply  \(\left(\x_{i}-\x_{0}\right)^{\top}\) and \(\left(\x_{i}-\x_{0}\right)\), and then we can obtain 
\begin{equation}
\label{also-KKT}
\begin{aligned}
 &2\eta_{1}\left(\x_{i}-\x_{0}\right)^{\top} \mathbf{G}\left(\x_{i}-\x_{0}\right)\\
 =& \frac{1}{2}\sum_{l=1}^{m} \lambda_{l}\left[\left(\x_{i}-\x_{0}\right)^{\top}\left(\x_{l}-\x_{0}\right)\right]^{2} -\left(2\eta_{3}T+\eta_{4}c\right)\left\|\x_{i}-\x_{0}\right\|_2^2,\ 1\le i\le m.
\end{aligned}
\end{equation}
Besides, we let \(\mathbf{G}\) multiply \(\e_{j}^{\top}\) and \(\e_{j}\). The vector {\color{black}\(\e_{j}\in\Re^n\)} is \((0,\ldots,0,1,0,\ldots,0)^{\top}\), of which only the $j$-th element is 1. Then we can obtain 
\begin{equation}
\label{ejGej}
\begin{aligned}
 2\eta_{1}\e_j^{\top} \mathbf{G}\e_j = &\frac{1}{2}\sum_{l=1}^{m} \lambda_{l}\left[\e_j^{\top}\left(\x_{l}-\x_{0}\right)\right]^{2}-\left(2\eta_{3}T+\eta_{4}c\right)\e_j^{\top}\e_j,\ 1\le j\le n.
\end{aligned}
\end{equation}
If we calculate the summation of (\ref{ejGej}) by \(j\) from \(1\) to \(n\), we can obtain 
\begin{equation*}
\begin{aligned}
2\eta_{1}T = &\frac{1}{2}\sum_{l=1}^{m} \sum_{j=1}^{n}\lambda_{l}\left[\e_j^{\top}\left(\x_{l}-\x_{0}\right)\right]^{2}-
n{\left(2\eta_{3}T+\eta_{4}c\right)},
\end{aligned}
\end{equation*}
and then 
\begin{equation}\label{othern}
0=\frac{1}{2}\sum_{l=1}^{m} \lambda_{l}\left\| \x_{l}-\x_{0}\right\|_2^{2}-(2n\eta_{3}+2\eta_1)T-n\eta_{4}c. 
\end{equation}
Hence we obtain the expression of $T$, i.e.,
\begin{equation}
\label{Tiswhat}
T=\frac{1}{2\left(2 n \eta_{3}+2 \eta_{1}\right)} \sum_{l=1}^{m} \lambda_{l}\left\| \x_{l}-\x_{0}\right\|_2^2-\frac{n \eta_{4}}{2 n \eta_{3}+2 \eta_{1}} c.
\end{equation}
\begin{sloppypar}
Combining with the constraints in (\ref{lagobj}), we obtain the following constraints of {\color{black}$\contour[2]{black}{$\lambda$}=(\lambda_1,\ldots,\lambda_m)^{\top},c,\g$,} 
\end{sloppypar}
\begin{equation}
\begin{aligned}
0=&2 \eta_{5} c+\frac{\eta_{4}}{4 n \eta_{3}+4 \eta_{1}} \sum_{l=1}^{m} \lambda_{l}\left\|\x_{l}-\x_{0}\right\|_2^2-\frac{n \eta_{4}^{2}}{2 n \eta_{3}+2 \eta_{1}} c-\sum_{l=1}^{m} \lambda_{l},\\
\mathbf{0}_{n}=&2 \eta_{2} \g-\sum_{l=1}^{m} \lambda_{l}\left(\x_{l}-\x_{0}\right),\\
f(\x_i)=&\frac{1}{8 \eta_{1}} \sum_{l=1}^{m} \lambda_{l} \left[\left(\x_{i}-\x_{0}\right)^{\top}\left(\x_{l}-\x_{0}\right)\right]^{2} \\
&-\frac{\eta_{3}}{8\eta_{1}\left(n \eta_{3}+\eta_{1}\right)} \sum_{l=1}^{m} \lambda_{l}\left\|\x_{l}-\x_{0}\right\|_2^2\left\|\x_{i}-\x_{0}\right\|_2^2 \\
&-\frac{\eta_{4}}{4 n \eta_{3}+4 \eta_{1}} c\left\|\x_{i}-\x_{0}\right\|_2^2+c+\left(\x_{i}-\x_{0}\right)^{\top} \g,\ i=1,\ldots,m.
\end{aligned}
\end{equation}

Then we can obtain the following system of equations since $\x_t$ is dropped and replaced by $\x_{\text{new}}$ at the $k$-th iteration, and $Q_k(\x_i)-Q_{k-1}(\x_i)=f(\x_i)-Q_{k-1}(\x_i)$, for any $\x_i$ in the current interpolation set.
\begin{equation}\label{system1-1}
{\overbrace{
\begin{pmatrix}
 \mathbf{A} & 
  \mathbf{J}
  &\mathbf{X}  \\
\mathbf{J}^{\top}   & \frac{n \eta_{4}^{2}}{2 n \eta_{3}+2 \eta_{1}}-2\eta_5 & \mathbf{0}_n^{\top}  \\
\mathbf{X}^{\top} & \mathbf{0}_n &  -2\eta_2\mathbf{I}
\end{pmatrix}}^{m+1+n}}
\left(\begin{array}{c}
\contour[2]{black}{$\lambda$} \\
c \\
\g
\end{array}\right)=
\left(\begin{array}{c}
0\\
\vdots \\
0\\
f(\x_\text{new})-Q_{k-1}{(\x_\text{new})}\\
0\\
\vdots \\
0
\end{array}\right),
\end{equation}
where \(\x_{\text{new}}\) denotes the new interpolation point, and $\contour[2]{black}{$\lambda$}=(\lambda_1,\ldots,\lambda_m)^{\top}$. Besides, $\mathbf{I}\in{\Re}^{n\times n}$ is the identity matrix, and \(\mathbf{A}\) has the elements
\begin{equation*}
\mathbf{A}_{i j}=\frac{1}{{8}\eta_1}\left[\left({\x}_{i}-{\x}_{0}\right)^{\top}\left({\x}_{j}-{\x}_{0}\right)\right]^{2}-\frac{\eta_{3}}{8\eta_{1}\left(n \eta_{3}+\eta_{1}\right)} \left\|\x_{i}-\x_{0}\right\|_2^2\left\|\x_{j}-\x_{0}\right\|_2^2,
\end{equation*}
where \(1 \leq i, j \leq m\). Besides, it holds that
\(
\mathbf{X}=\left(\x_1-\x_0, \x_2-\x_0,\ldots,\x_m-\x_0\right)^{\top},
\) 
and
\begin{equation*}
\mathbf{J}=
\left(
  1-\frac{\eta_{4}}{4 n \eta_{3}+4 \eta_{1}}\left\|\x_{1}-\x_{0}\right\|_2^2,
  \ldots, 
 1-\frac{\eta_{4}}{4 n \eta_{3}+4 \eta_{1}}\left\|\x_{m}-\x_{0}\right\|_2^2
 \right)^{\top}.
 \end{equation*}
We denote the matrix on the left hand side of (\ref{system1-1}) as KKT matrix \(\mathbf{W}\).

The quadratic model function \(Q(\x)\) is obtained from \(\contour[2]{black}{$\lambda$}, c,\g\) according to the expression of $Q(\x)$, (\ref{Q(x)G}) and (\ref{Tiswhat}). We can find that the least Frobenius norm updating model is a special case of the least \(H^2\) norm updating model in the following.

\begin{remark}
If \(C_{1}=C_2=0,C_{3}=1\), then $\eta_1=1, \eta_2=\eta_3=\eta_4=\eta_5=0$, and the KKT matrix is 
\begin{equation}\label{powellw}
\mathbf{W}=\left(
\begin{array}{cccc}
\bar{\mathbf{A}} & \mathbf{E} & \mathbf{X} \\
\mathbf{E}^{\top} & 0 & \mathbf{0}_n^{\top} \\
\mathbf{X}^{\top} & \mathbf{0}_{n} & \mathbf{0}_{nn} 
\end{array}\right),
\end{equation}
where $\mathbf{E}\in{\Re}^{m\times 1}$ is $(1,\ldots,1)^{\top}$, and \(
\bar{\mathbf{A}}_{i j}=\frac{1}{8}\left[\left({\x}_{i}-{\x}_{0}\right)^{\top}\left({\x}_{j}-{\x}_{0}\right)\right]^{2}, \ 1 \leq i, j \leq m.
\)

In this case, the Hessian matrix, corresponding to interpolation points $\x_1,\ldots,\x_m$, is 
\begin{equation}
\label{Old-KKT-Frob}
\mathbf{G}=\frac{1}{4}\sum^{m}_{i=1}\lambda_i (\x_i-\x_0)(\x_i-\x_0)^{\top}.
\end{equation}
The $(m+n+1)\times(m+n+1)$ matrix in (\ref{powellw}) is exactly the corresponding KKT matrix of least Frobenius norm updating  quadratic model \cite{NEWUOA}. Notice that the coefficient $\frac{1}{4}$ in (\ref{Old-KKT-Frob}) depends on the coefficient in Lagrange function (\ref{lagfunction}). 
\end{remark}

To reduce the computation complexity, we will apply the updating formula of the KKT inverse matrix in the following. 
Before discussing the inverse matrix of the KKT matrix, we firstly give a theorem here. The following theorem illustrates the condition in which the KKT matrix is invertible.

\begin{theorem}\label{invertible}
The $(m+n+1)\times (m+n+1)$ matrix \(\mathbf{W}\) is an invertible matrix if and only if the $(m+1) \times (m+1)$ matrix 
\begin{equation*}
\begin{pmatrix}
 \mathbf{A}+\frac{1}{2\eta_2}\mathbf{X}\mathbf{X}^{\top} & 
  \mathbf{J}\\
\mathbf{J}^{\top}   & \frac{n \eta_{4}^{2}}{2 n \eta_{3}+2 \eta_{1}}-2\eta_5 
\end{pmatrix}
\end{equation*}
is invertible. 
\end{theorem}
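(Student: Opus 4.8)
The plan is to eliminate the variable $\g$ from the block system (\ref{system1-1}) using the last block row, which expresses $\g$ explicitly in terms of $\contour[2]{black}{$\lambda$}$, and then invoke the standard fact that a block $2\times 2$ matrix is invertible if and only if an appropriate Schur complement is. Concretely, the third block row of $\mathbf{W}$ reads $\mathbf{X}^{\top}\contour[2]{black}{$\lambda$} - 2\eta_2\g = \mathbf{0}_n$, so $\g = \frac{1}{2\eta_2}\mathbf{X}^{\top}\contour[2]{black}{$\lambda$}$ whenever $\eta_2 > 0$ (which holds under the running assumption $C_1,C_2,C_3\ge 0$, $C_1+C_2+C_3>0$, since $\eta_2 = C_1\frac{r^2}{n+2}+C_2$; the degenerate case $\eta_2=0$ would force $C_1=C_2=0$, i.e.\ the least Frobenius norm model, which should be handled separately or excluded here). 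Substituting this relation back into the first block row replaces $\mathbf{A}\contour[2]{black}{$\lambda$} + \mathbf{X}\g$ by $\bigl(\mathbf{A}+\frac{1}{2\eta_2}\mathbf{X}\mathbf{X}^{\top}\bigr)\contour[2]{black}{$\lambda$}$, producing exactly the $(m+1)\times(m+1)$ matrix in the statement acting on $(\contour[2]{black}{$\lambda$}^{\top},c)^{\top}$.

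The first step I would carry out is to make this reduction rigorous as a statement about matrix invertibility rather than about solvability of a particular right-hand side. Writing $\mathbf{W}$ in the $2\times 2$ block form
\[
\mathbf{W}=\begin{pmatrix} \mathbf{M} & \mathbf{X}' \\ (\mathbf{X}')^{\top} & -2\eta_2\mathbf{I}\end{pmatrix},
\quad
\mathbf{M}=\begin{pmatrix}\mathbf{A} & \mathbf{J}\\ \mathbf{J}^{\top} & \frac{n\eta_4^2}{2n\eta_3+2\eta_1}-2\eta_5\end{pmatrix},
\quad
\mathbf{X}'=\begin{pmatrix}\mathbf{X}\\ \mathbf{0}_n^{\top}\end{pmatrix},
\]
the lower-right block $-2\eta_2\mathbf{I}$ is invertible (as $\eta_2>0$), so the standard Schur complement identity gives that $\mathbf{W}$ is invertible if and only if the Schur complement $\mathbf{M} - \mathbf{X}'(-2\eta_2\mathbf{I})^{-1}(\mathbf{X}')^{\top} = \mathbf{M} + \frac{1}{2\eta_2}\mathbf{X}'(\mathbf{X}')^{\top}$ is invertible. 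Computing $\mathbf{X}'(\mathbf{X}')^{\top}$ block-wise yields $\begin{pmatrix}\mathbf{X}\mathbf{X}^{\top} & \mathbf{0}\\ \mathbf{0}^{\top} & 0\end{pmatrix}$, so the Schur complement is precisely
\[
\begin{pmatrix}\mathbf{A}+\frac{1}{2\eta_2}\mathbf{X}\mathbf{X}^{\top} & \mathbf{J}\\ \mathbf{J}^{\top} & \frac{n\eta_4^2}{2n\eta_3+2\eta_1}-2\eta_5\end{pmatrix},
\]
which is the claimed $(m+1)\times(m+1)$ matrix. This completes the argument via $\det\mathbf{W} = \det(-2\eta_2\mathbf{I})\cdot\det(\text{Schur complement})$ and $\det(-2\eta_2\mathbf{I})=(-2\eta_2)^n\neq 0$.

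I expect the main obstacle to be largely bookkeeping rather than conceptual: one must verify that $2n\eta_3+2\eta_1\neq 0$ so that the scalar entry $\frac{n\eta_4^2}{2n\eta_3+2\eta_1}-2\eta_5$ is well defined (this follows from $\eta_1 = C_1\frac{r^4}{2(n+4)(n+2)}+C_2\frac{r^2}{n+2}+C_3 > 0$ whenever $\eta_2>0$, in fact $\eta_1>0$ under the hypotheses), and one must be careful about the edge case $\eta_2=0$, in which the third block becomes $\mathbf{0}_{nn}$ and the reduction above fails — there the theorem as stated would need the Frobenius-norm treatment of the separate remark, or an added hypothesis $\eta_2>0$. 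Assuming $\eta_2>0$, the proof is the one-line Schur complement computation above; I would present it in exactly that order: (i) record $\eta_2>0$, (ii) write the block form, (iii) apply the Schur complement determinant identity, (iv) evaluate $\mathbf{X}'(\mathbf{X}')^{\top}$ to identify the Schur complement.
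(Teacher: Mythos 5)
Your proof is correct and takes essentially the same route as the paper's: the paper performs the block elimination explicitly as an ``elementary transformation'' (adding $\frac{1}{2\eta_2}$ times the third block column, scaled by $\mathbf{X}^{\top}$, to the first), which produces exactly the Schur complement $\mathbf{A}+\frac{1}{2\eta_2}\mathbf{X}\mathbf{X}^{\top}$ you compute and reduces $\mathbf{W}$ to a block-triangular matrix whose other diagonal block is $-2\eta_2\mathbf{I}$. Your explicit observation that one needs $\eta_2>0$ (so that $-2\eta_2\mathbf{I}$ is invertible and $\frac{1}{2\eta_2}$ is defined, excluding the pure Frobenius-norm case $C_1=C_2=0$ treated in the paper's separate remark) is a hypothesis the paper's proof uses implicitly but does not state.
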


\begin{proof}
We know that
\begin{align*}
\begin{pmatrix}
 \mathbf{A} & 
  \mathbf{J}
  &\mathbf{X}  \\
\mathbf{J}^{\top}   & \frac{n \eta_{4}^{2}}{2 n \eta_{3}+2 \eta_{1}}-2\eta_5 & \mathbf{0}_n^{\top}  \\
\mathbf{X}^{\top} & \mathbf{0}_n &  -2\eta_2\mathbf{I}
\end{pmatrix}\rightarrow
\begin{pmatrix}
 \mathbf{A}+\frac{1}{2\eta_2}\mathbf{X}\mathbf{X}^{\top} & 
  \mathbf{J}
  &\mathbf{X}  \\
\mathbf{J}^{\top}   & \frac{n \eta_{4}^{2}}{2 n \eta_{3}+2 \eta_{1}}-2\eta_5 & \mathbf{0}_n^{\top}  \\
\mathbf{0}_{mn} & \mathbf{0}_n &  -2\eta_2\mathbf{I}
\end{pmatrix},
\end{align*}
where the arrow denotes elementary transformations, and $\mathbf{0}_{mn}\in{\Re}^{m\times n}$ is the zero matrix. Then the conclusion holds.
\end{proof}

Theorem \ref{invertible} gives the sufficient and necessary condition in which the KKT matrix is invertible. We call the inverse matrix of the KKT matrix as KKT inverse matrix for simplicity. 

Obtaining the parameters of the quadratic model function, i.e., \(\contour[2]{black}{$\lambda$},c,\g\), by solving KKT equations (\ref{system1-1}) at each iteration is not efficient, of which the computational complexity is $\mathcal{O}((m+n)^3)$. We apply the updating formula of the KKT inverse matrix with low computational complexity. 
Similarly to the discussion in Powell's work \cite{NEWUOA,powell04onupdating}, a natural question is what will happen on the KKT matrix with the iteration increasing. When the interpolation set is updated, we will find that only the \(t\)-th column and \(t\)-th row of \(\mathbf{W}\) change, since only $\x_t$ is dropped and replaced by $\x_\text{new}$ in the algorithm. Therefore, we apply the updating formula of the KKT inverse matrix given by Powell \cite{powell04onupdating}.

We define the vector $\contour[2]{black}{$\omega$}\in{\Re}^{m+n+1}$ with components $\omega_{i}$ that are separately equal to
\begin{equation}
\label{omega}
\left\{
\begin{aligned}
&\frac{1}{8\eta_1}\left[\left({\x}_{i}-{\x}_{0}\right)^{\top}\left({\x_{\text{new}}}-{\x}_{0}\right)\right]^{2}-\frac{\eta_{3}}{8\eta_{1}\left(n \eta_{3}+\eta_{1}\right)} \left\|\x_{i}-\x_{0}\right\|_2^2\left\|\x_{\text{new}}-\x_{0}\right\|_2^2, \\
&\quad \quad \quad \quad \quad \quad \quad \quad \quad \quad \quad \quad \quad \quad \quad \quad \quad \quad \quad \quad \quad \quad \quad \quad \quad \quad \text{ if \(1\le i\le m\)},\\
&1-\frac{\eta_{4}}{4 n \eta_{3}+4 \eta_{1}}\left\|\x_{\text{new}}-\x_{0}\right\|_2^2, \text{ if \(i= m+1\)},\\
&\left(\x_{\text{new}}-\x_{0}\right)_{i-m-1}, \text{ if \(m+2\le i\le m+n+1\)},
\end{aligned}
\right.
\end{equation}
and if an invertible KKT matrix \(\mathbf{W}\)'s \(t\)-th column and \(t\)-th row are replaced by the vector \(\contour[2]{black}{$\omega$}\) and  \(\contour[2]{black}{$\omega$}^{\top}\) separately, and the new matrix is denoted as \(\mathbf{W}_{\text{new}}\), and \(\mathbf{H}_{\text{new}}:=\mathbf{W}_{\text{new}}^{-1}\), \(\mathbf{H}:=\mathbf{W}^{-1}\), then we can obtain the following updating formula applied in our algorithm with least $H^2$ norm updating quadratic model.
\begin{equation}
\label{updateformula-t}
\begin{aligned}
\mathbf{H}_{\text{new}}=&\mathbf{H}+\sigma^{-1}\left\{\alpha\left({\e}_{t}-\mathbf{H} {\contour[2]{black}{$\omega$}}\right)\left({\e}_{t}-\mathbf{H} {\contour[2]{black}{$\omega$}}\right)^{\top}-\beta \mathbf{H} {\e}_{t} {\e}_{t}^{\top} \mathbf{H}\right.\\
&\left.+\tau\left[\mathbf{H} {\e}_{t}\left({\e}_{t}-\mathbf{H} {\contour[2]{black}{$\omega$}}\right)^{\top}+\left({\e}_{t}-\mathbf{H} {\contour[2]{black}{$\omega$}}\right) {\e}_{t}^{\top} \mathbf{H}\right]\right\},
\end{aligned}
\end{equation}
where
\begin{equation}\label{coeff}
\left\{
\begin{aligned}
&{\alpha}=\e_{t}^{\top} \mathbf{H} \e_{t},\ {\beta}=\frac{1}{8\eta_1}\left\|{\x_{\text{new}}}-{\x}_{0}\right\|_2^{4}-\contour[2]{black}{$\omega$}^{\top}\mathbf{H}\contour[2]{black}{$\omega$},\\
&{\tau}=\e_{t}^{\top} \mathbf{H}\contour[2]{black}{$\omega$},\ {\sigma}={\alpha} {\beta}+\tau^2,
\end{aligned}
\right.
\end{equation}
and \(\e_{t}\) is \((0,\ldots,0,1,0,\ldots,0)^{\top}\), of which only the $t$-th element is 1.

We obtain the new KKT inverse matrix  \(\mathbf{H}_\text{new}\) according to the updating formula (\ref{updateformula-t}), and then obtain \((\contour[2]{black}{$\lambda$},c,\g)^{\top}\) by
\begin{equation*}
\left(
\contour[2]{black}{$\lambda$},c,\g
\right)^{\top}=\mathbf{H}_\text{new}
\left(
0,\ldots,0,f(\x_\text{new})-Q_{k-1}{(\x_\text{new})},0,\ldots,0
\right)^{\top}.
\end{equation*}
The corresponding updating of $\mathbf{H}$ can be calculated in $\mathcal{O}((m+n)^2)$ operations.

After giving the updating formula, we consider more details to improve the robust property of the updating formula by choosing a new iteration point when using the algorithms based on least $H^2$ norm updating model function with KKT inverse matrices updated by (\ref{updateformula-t}).

The updating formula (\ref{updateformula-t}) of the inverse matrix of the KKT matrix has a denominator 
\(
{\sigma}={\alpha} {\beta}+\tau^2
\),  
and the expressions of \(\alpha, \beta\) and \(\tau\) are in (\ref{coeff}). In order to avoid the absolute value of the denominator \(\sigma\) from being too small, which will make the numerical updating instable, we use the model improvement step as Step 4 in Algorithm \ref{algo-TR}, i.e., reaching the iteration point $\x_{\text{opt}}+\dd$, where $\dd \in {\Re}^{n}$ is obtained by solving the problem
\begin{equation}\label{max-1}
\max_{\dd}\ \vert{\alpha} {\beta}+\tau^2\vert,\ 
\text { subject to }\ \|\dd\|_2 \leq \Delta_k.
\end{equation}
The objective function in (\ref{max-1}) is a function of \(\dd\) because \(\x_{\text{new}}=\x_{\text{opt}}+\dd\), where \(\x_{\text{opt}}\) is the minimum point at the current step. Subproblem (\ref{max-1}) is a quartic problem of \(\dd\). The model improvement step is chosen as the solution of a quartic problem on the trust region, and the truncated conjugated gradient method is applied to solve such subproblem, when the updating formula (\ref{updateformula-t}) is possibly not robust caused by an ill-poised interpolation set, in the implementation of the algorithm based on least $H^2$ norm updating quadratic model. 

In the current implementation, the algorithm will not accept the model, and then it will apply model improvement step, if $\rho_k<\hat{\eta}_1$ and the farthest point $\x_{\text{far}}$ from $\x_{\text{opt}}$ satisfies that $\|\x_{\text{far}}-\x_{\text{opt}}\|_2>2\Delta_k$, which is a similar criterion with that of Powell's solver NEWUOA. Besides, it changes the base point $\x_0$ to be the current $\x_{\text{opt}}$, which is the center of the next trust region, when $\|\x_{k}-\x_{0}\|_2>10\Delta_k$. Powell discusses more details \cite{NEWUOA}. The updating formula (\ref{updateformula-t}) indeed reduces the whole computation complexity during the iteration. More details of the choice of $\x_0$ can be seen in the work of Zhang \cite{Zhang2014}. In addition, for the geometry property and poisedness of the interpolation set, more details are in the work of Conn, Scheinberg and Vicente \cite{conn2008geometry}.

\section{Numerical Results\label{section7}}

To illustrate the advantages of our least \(H^2\) norm quadratic model, we present numerical results related to solving unconstrained  derivative-free optimization problems as (\ref{dfo-uncon}). Numerical experiments contain three parts, and they are separately the observation and comparison of interpolation error and stable updating, a simple simulation, and the performance profile based on solving  test problems. We implement a derivative-free trust-region algorithm according to the framework shown as Algorithm \ref{algo-TR} in Python for numerical test. The least $H^2$ norm quadratic model used in the test implementation is obtained by updating formula (\ref{system1-1}), and we update the KKT inverse matrix using formula (\ref{updateformula-t}). The model improvement step in the algorithm is obtained by solving subproblem (\ref{max-1}). To achieve a direct and fair comparison for different model functions in Example \ref{example-1}, Example \ref{example-2} and the comparison with least Frobenius norm updating quadratic model by performance profile, corresponding formulas will be substituted by that of least Frobenius norm updating model, and the framework is the same one.  The weight coefficients \(C_1, C_2,C_3\) are set equally as $\frac{1}{3}$ in the numerical experiments. In our numerical implementation, the radius $r$ is chosen as $\max \left\{10 \Delta_k, \max_{\x \in \mathcal{X}_k}\|\x-{\x}_{\text{opt}}\|_{2}\right\}$ at the $k$-th step, which is as same as the setting in the work of Zhang \cite{Zhang2014}\footnote{There are other ways to define $r$. Our definition is simple and enough for numerical experiment.}. One can obtain test codes for constructing least $H^2$ norm updating quadratic model from the online repository\footnote{https://github.com/PengchengXieLSEC/least\_H2\_norm\_updating\_model}. Numerical results confirm the advantage of our choice to obtain the quadratic model function by minimizing the $H^2$ norm instead of minimizing the Frobenius norm.

\subsection{Interpolation error and stable updating}

We make a numerical observation about the interpolation error and the stability when updating the quadratic model based on minimizing the \(H^{2}\) norm of the difference between two models.

In order to illustrate the advantages of applying \(H^{2}\) norm to obtain our model function, we use the following example to numerically capture the differences between the interpolation with least \(H^{2}\) norm and the interpolation with least Frobenius norm of the Hessian matrix. 

\begin{example}\label{example-1}
The updating (\ref{H2example}) can be transformed to obtaining $D_k$ by solving 
\begin{equation}\label{H2example-2}
\begin{aligned}
\mathop{\min}\limits_{D\in\mathcal{Q}}&\ \Vert D\Vert^2_{H^{2}{(\mathcal{B}^{r}_{2}(\x_{0}))}}\\
\text{subject to}&\ D(\x_{\text{new}})=f(\x_{\text{new}})-Q_{k-1}(\x_{\text{new}}),\ \x_{\text{new}}\in \mathcal{X}_k,\\
&\ D(\x_i)=0,\ \x_i\in \mathcal{X}_k\backslash\{\x_{\text{new}}\},
\end{aligned}
\end{equation}
where $D_k=Q_k-Q_{k-1}$. In this simple 2-dimensional example, the function $f-Q_{k-1}$ in problem (\ref{H2example-2}) is assumed to satisfy 
\begin{equation}
\label{f-example-1-during-iteration}
f(\x)-Q_{k-1}(\x)=
\left\{
\begin{aligned}
&1,\ \text{if}\ \x= \x_{\text{new}},\\
&0,\ \text{otherwise}, 
\end{aligned}
\right.
\end{equation}
during the iteration. 
\begin{remark}\color{black}
This example is exactly related to a Lagrange basis function. Notice that the old point $\x_t$ is dropped and replaced by $\x_{\text{new}}$ at the corresponding $k$-th iteration. The function $Q_k=Q_{k-1}+D_k$ is exactly the $k$-th model function, and the initial model is $Q_0(\x)=0$. Before going into the iteration, $f((0,0)^{\top})=1$, and $f(\x)=0,\forall\ \x\ne (0,0)^{\top}$, and then $f$ is defined by (\ref{f-example-1-during-iteration}) in the following steps. 
\end{remark} 
We use 3 interpolation points at each step, and the initial interpolation points of this simple example are \(\x_1=(0,0)^{\top}\), \(\x_2=(1,0)^{\top}\), \(\x_3=(0,1)^{\top}\), which follows a frequently-used choice.  The total number of the iteration is 3, and the trust region here is set as $\mathcal{B}_2^1(\x_{\text{opt}})$, where $\x_{\text{opt}}$ is the minimum point in the current iteration. We want to focus on the initial interpolation behavior comparison between least Frobenius norm updating model and least $H^2$ norm updating model without model improvement. It can be regarded as a sufficiently fair and intuitive comparison to compare the interpolation errors at the first 3 iterations with the fixed trust-region radius, which also supports the following simple choice of the region containing grid points being calculated interpolation errors on. The number of interpolation points is selected as 3 to ensure that the compared least Frobenius norm updating model is fully linear.

Fig. \ref{sub-zong-2} shows numerical results. In each sub-figures of  Fig. \ref{sub-zong-2}, we plot two lines to separately denote Powell's model and our model, which are separately updated based on least Frobenius norm updating and least \(H^{2}\) norm updating. Fig. \ref{sub-a} and Fig. \ref{sub-b} present the maximum interpolation error and the mean of interpolation error at all iteration points versus iterations. Fig. \ref{sub-c} and Fig. \ref{sub-d} present the maximum interpolation error and the mean of interpolation error at grid points versus iterations. The interpolation error at iteration points and the interpolation error at grid points are defined as \(\text{Err}_{\text{itr}}(\x)=\vert f(\x)-Q_k(\x)\vert\), and \(\text{Err}_{\text{grid}}(\y_{pq})=\vert f(\y_{pq})-Q_k(\y_{pq})\vert\), where \(\x\) is a historical iteration point, and \(\y_{pq}=(\frac{p}{100},\frac{q}{100})^{\top}\), \(p,q\in [-100,100]\cap\mathbb{Z}\).

The difference of the  interpolation error in Fig. \ref{sub-zong-2} shows  the advantage of our least \(H^{2}\) norm quadratic model. Notice that during the iteration, our model  has less interpolation error at the old dropped interpolation points and the grid points in the region $[-1,1]^2$ than least Frobenius norm updating model. In the other word, we can observe that least $H^2$ norm updating is stable, which is a consequent result of the property of $H^2$ norm itself and the projection relationship (\ref{projH}).

\begin{figure}[htbp]
\centering
\subfigure[\label{sub-a}]{
\centering
\includegraphics[height=1.2in]{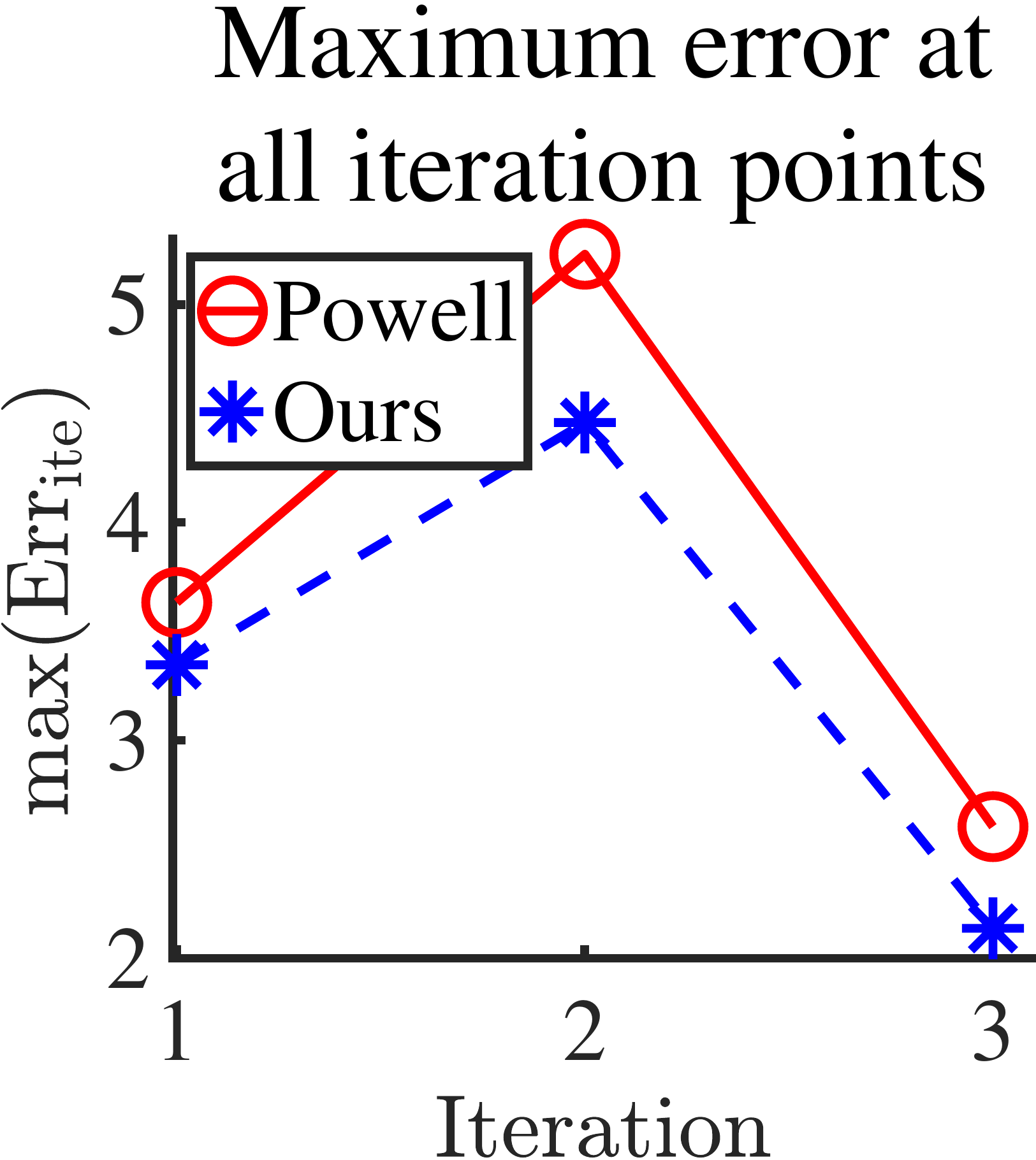}
}
\subfigure[\label{sub-b}]{
\centering
\includegraphics[height=1.2in]{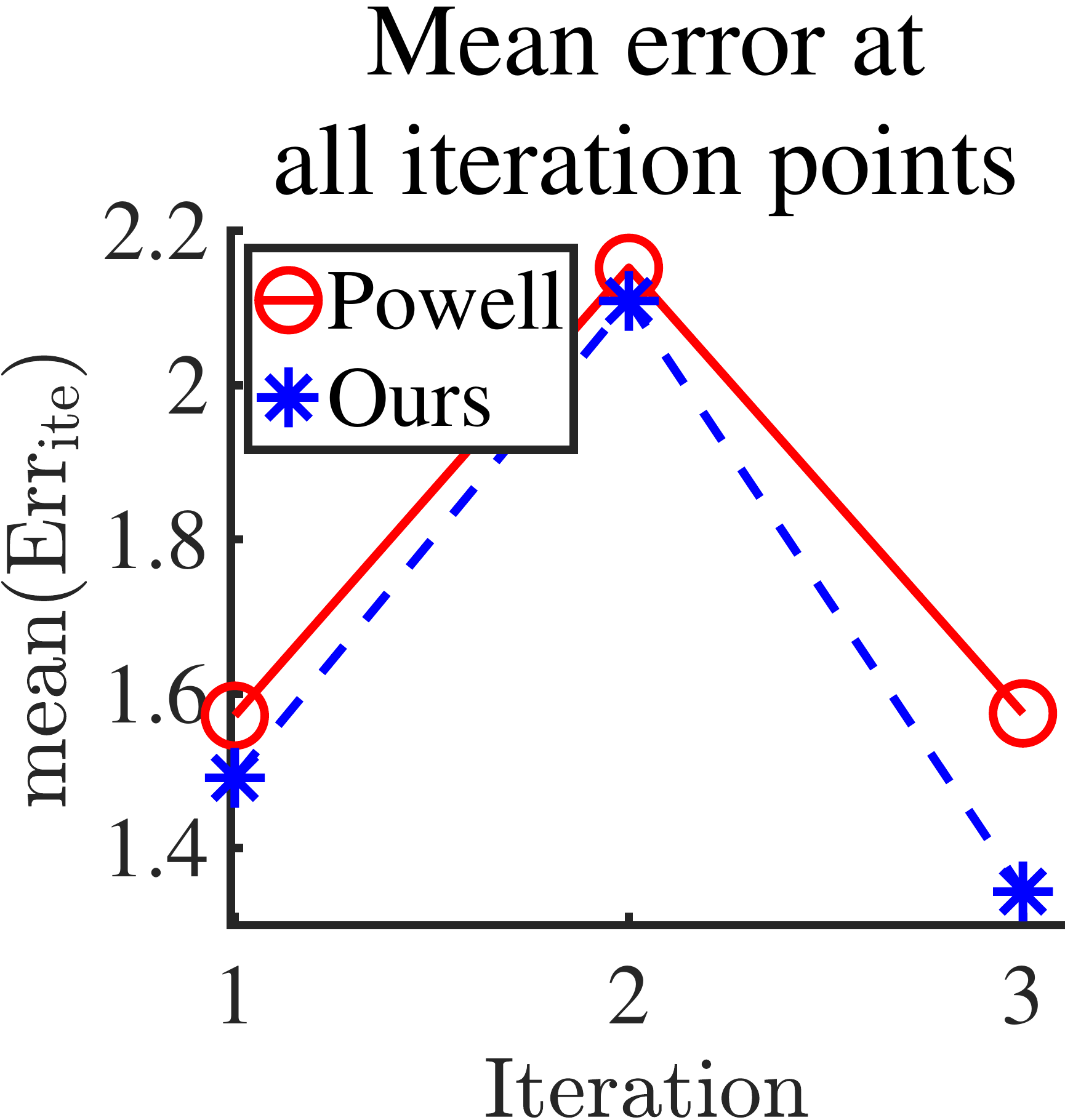}
}
\subfigure[\label{sub-c}]{
\centering
\includegraphics[height=1.2in]{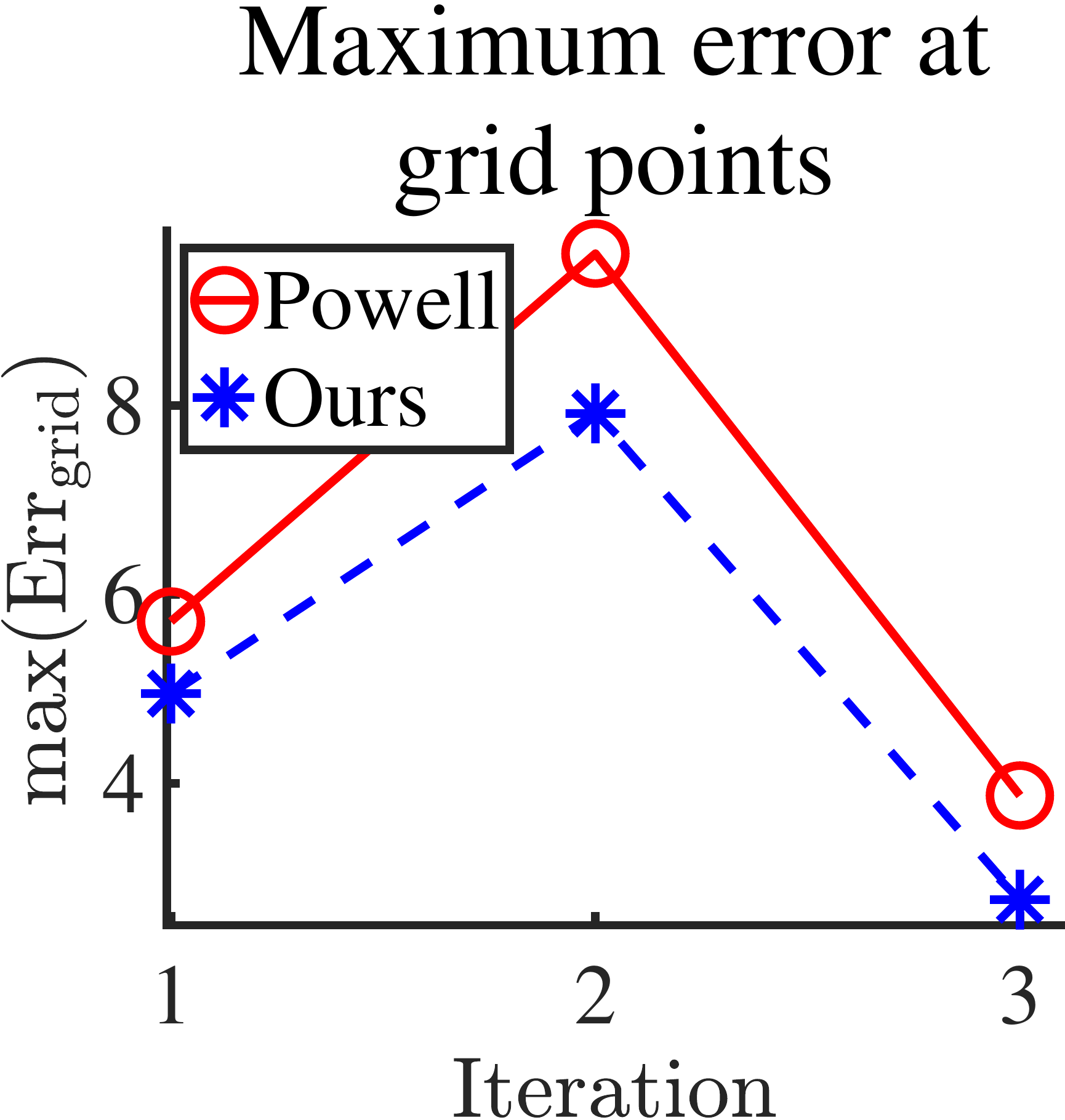}
}
\subfigure[\label{sub-d}]{
\centering
\includegraphics[height=1.2in]{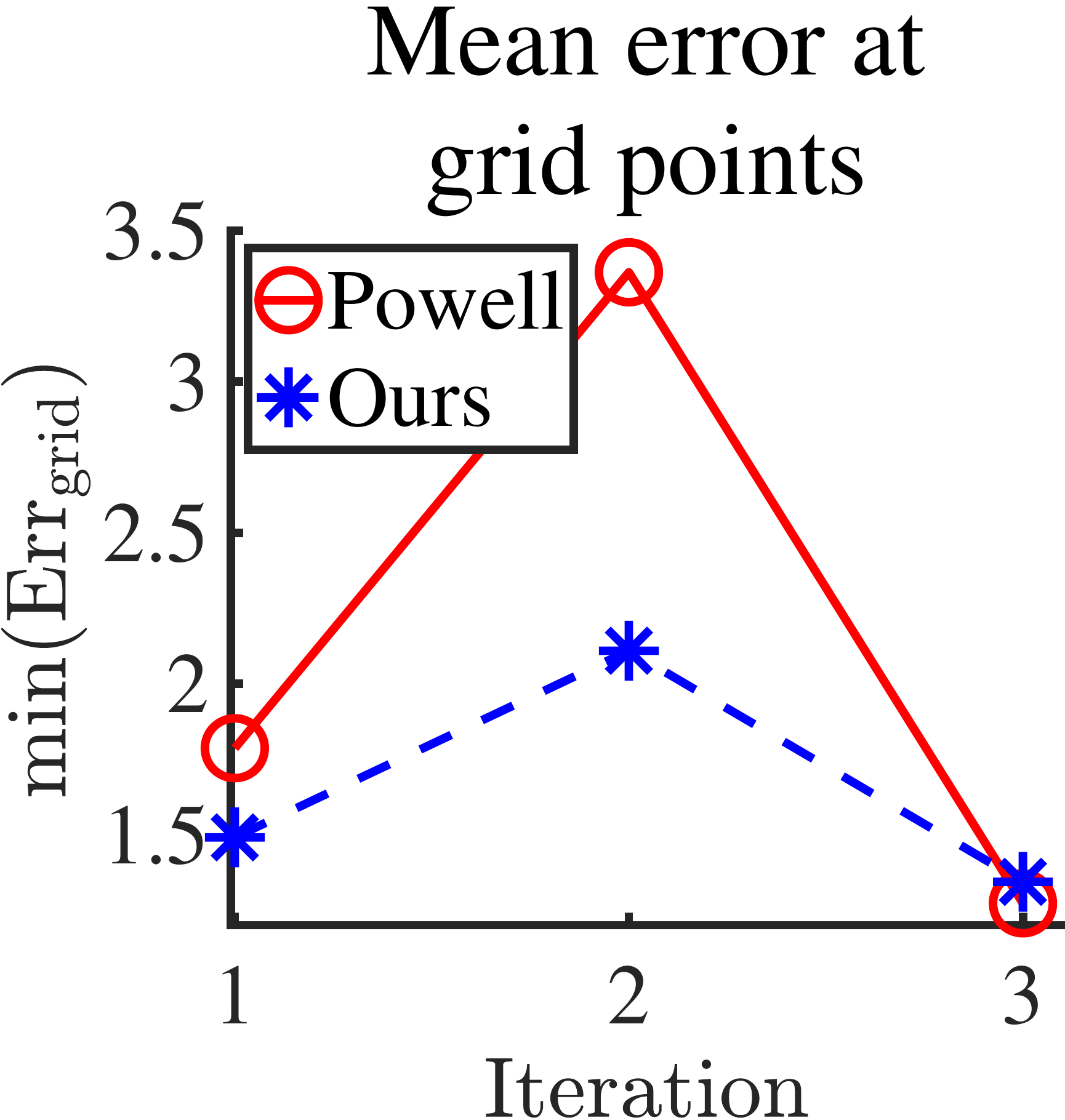}
}
\centering

\caption{Interpolation error comparison of different interpolation quadratic models.\label{sub-zong-2}}
\end{figure}

It can be noted that the objective function we desgin here scales the function value of the interpolation constraint at $\x_{\text{new}}$ to be 1, and the function $f$ itself is discontinuous and highly-nonlinear. Therefore, it is not necessary that the interpolation error is always in a small-scale or vanishing, since the model function is continuous, and the setting above can help us take a clear observation under fair and simple conditions.
\end{example}

\subsection{A simple simulation}

The following example simulates advantages of least $H^2$ norm updating model when iteratively solving a simple and classical test problem.
\begin{example}\label{example-2}
The objective function of this simple simulation is 2-dimensional Rosenbrock function 
\(
f(\x)=f(x_{(1)},x_{(2)})=(1-x_{(1)})^2+100(x_{(2)}-x_{(1)}^2)^2,
\) 
where $x_{(1)}$ and $x_{(2)}$ denote the 1st and 2nd components of the variable $\x$. The initial  interpolation points used in this simulation are uniformly distributed on the unit circle, i.e., \(\x_1=(0, 0)^{\top}\), \(\x_2=(\frac{\sqrt{3}}{2},\frac{1}{2})^{\top}\),  \(\x_3=(-\frac{\sqrt{3}}{2},\frac{1}{2})^{\top}\), \(\x_4=(0,-1)^{\top}\), which are simple and general settings when there are 4 interpolation points in the space ${\Re}^2$, and such interpolation set is poised and related to a minimum positive basis with uniform angles. Being different from the comparison of interpolation error in Example \ref{example-1}, we aim to compare the two models from the perspective of minimizing a classical function. The number of  interpolation points at a step is chosen according to Powell's suggestion \cite{NEWUOA} that \(n+2\) interpolation points can provide at least a constraint on the Hessian matrix of the least Frobenius norm updating model when minimizing an $n$-dimensional objective function. 

{\color{black}Firstly, we calculate one step in the trust region $\mathcal{B}_2^1(\x_1)$, and we can see that the minimum point of our model function is closer to the true one than least Frobenius norm with the same given interpolation points.} Comparison results of the function values at the minimum points of the least Frobenius norm updating quadratic model $Q_1$ and the least $H^2$ norm updating model $\bar{Q}_1$, we say $f(\x_{\text{opt}})$ and $f(\bar{\x}_{\text{opt}})$, are in Table \ref{para-error}, and we can see that \(f(\bar{\x}_{\text{opt}})<f({\x}_{\text{opt}})\). In Table \ref{para-error}, $\g_1$, $\mathbf{G}_1$ and $\bar{\g}_1$, $\bar{\mathbf{G}}_1$ separately denote the gradient and Hessian matrix of the model function \(Q_1\) and \(\bar{Q}_1\) after the first iteration.

\begin{table}[htbp]
  \caption{Parameters and some results in Example \ref{example-2}\label{para-error}}
  \setlength\tabcolsep{3.3pt}
  \centering
  \begin{tabular}{ccc}
    \toprule
    \(\x_{\text{opt}}=   (0.0263,0.8158)^{\top}\) & \(f(\x_{\text{opt}})=   67.3882\)      & \(\g_1=(-2, -62)^{\top}\)        \\
    \(\bar{\x}_{\text{opt}} = (0.0321,0.6365)^{\top}\) & \(f(\bar{\x}_{\text{opt}})=   41.3190\) &      \(\bar{\g}_1=(-1.8065, -56)^{\top}\)  \\
     \(\mathbf{G}_1= \begin{pmatrix} 76 & 0 \\ 0 & 76 \end{pmatrix}\)  
     & 
     \(\bar{\mathbf{G}}_1 = \begin{pmatrix} 64 & -0.3871 \\ -0.3871 & 88 \end{pmatrix}\) & - \\
    \bottomrule
  \end{tabular}
\end{table}

Secondly, we minimize the 2-dimensional Rosenbrock function iteratively using the derivative-free trust-region methods based on least Frobenius norm updating model and least $H^2$ norm updating model separately. {\color{black}The} initial interpolation points are given above and the initial trust-region radius is still 1. Besides, the tolerences of trust-region radius and the gradient norm of the model are separately set as $10^{-8}$, and $\mu=0.1$. Parameters for updating trust-region radius are $\gamma=2, \hat{\eta}_1=\frac{1}{4}, \hat{\eta}_2=\frac{3}{4}$. Fig. \ref{result-Rosenbrock} shows the iteration results, and other details are in Table \ref{table-result-Rosenbrock}. NF denotes the number of function evaluations.  The algorithm based on our model has a faster numerical convergence than the one using least Frobenius norm updating model, especially when the function value is less than $10^{-3}$, and this depends on the higher approximate accuracy of our model. This simulation shows advantages of least $H^2$ norm updating model.

\begin{table}[htbp]
  \caption{Numerical results of minimizing Rosenbrock function\label{table-result-Rosenbrock}}
  \centering
  \setlength\tabcolsep{2.8pt}
  \begin{tabular}{cccccc}
  \toprule
model& NF & final $f$ value & model gradient norm & best point \\
\midrule
Powell& 67   & \(3.8672\times10^{-9}\) & 0.0015 & \((1.00005607, 1.00011483)^{\top}\)\\
Ours&   55   &  \(8.0639\times10^{-12}\)   & \(7.8587\times 10^{-6}\) & \((1.00000271, 1.00000535)^{\top}\)\\
 \bottomrule
  \end{tabular}
\end{table}
\begin{figure}[htbp]
\centering
\includegraphics[scale=0.265]{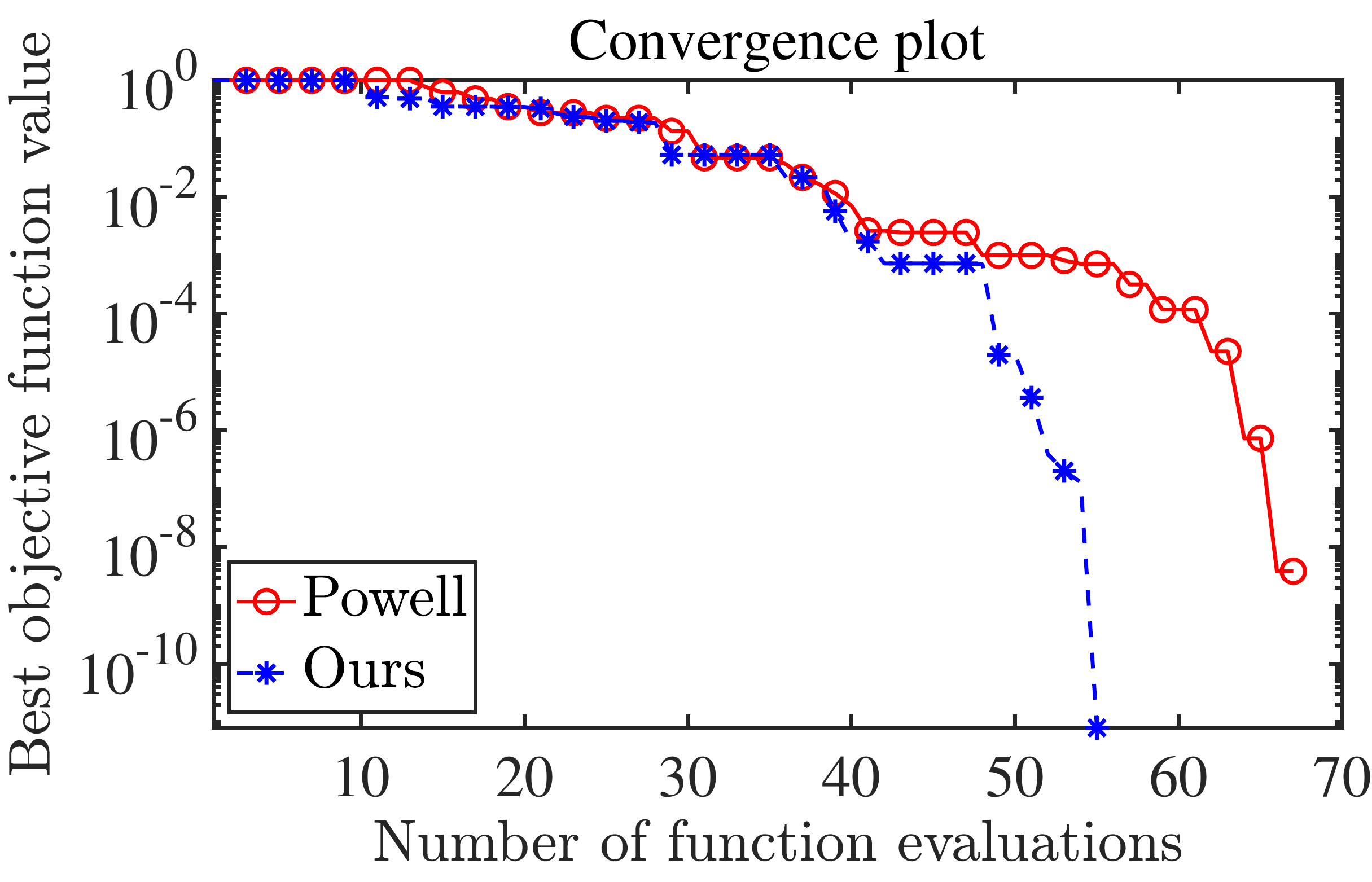}
\caption{Convergence plot of minimizing 2-dimensional Rosenbrock function based on Powell's least Frobenius norm updating model and our least $H^2$ norm updating model.\label{result-Rosenbrock}}
\end{figure}
In addition, we do numerical experiments with different number of interpolation points for each iteration of algorithm based on our least $H^2$ norm updating quadratic model, i.e., we separately set $m$ from $1$ to $\frac{1}{2}(n+1)(n+2)$.   Table \ref{new-func-diff} shows the number of function evaluations of applying algorithm based on least $H^2$ norm updating model functions, with different number of interpolation points, to minimize 2-dimensional Rosenbrock function. Other settings are as same as the settings above.

\begin{table}[htb]
  \caption{Minimizing Rosenbrock function with different number of interpolation points\label{new-func-diff}}
  \centering
  \begin{tabular}{ccccccc}
  \toprule
\multicolumn{6}{c}{Initial interpolation points}& NF \\
\midrule
\((0,0)^{\top}\)&-&-&-&-&-& 56 \\
\((0,0)^{\top}\) &\((1,0)^{\top}\)&-&-&-&-& 58 \\
\((0,0)^{\top}\)&\((1,0)^{\top}\)&\((0,1)^{\top}\)&-&-&-& 60 \\
\((0,0)^{\top}\)&\((\frac{\sqrt{3}}{2},\frac{1}{2})^{\top}\)&\((-\frac{\sqrt{3}}{2},\frac{1}{2})^{\top}\)&\((0,-1)^{\top}\)&-&-& 55 \\
\((0,0)^{\top}\)&\((1,0)^{\top}\)&\((0,1)^{\top}\)&\((-1,0)^{\top}\)&\((0,-1)^{\top}\)&-& 61 \\
\((0,0)^{\top}\)&\((1,0)^{\top}\)&\((0,1)^{\top}\)&\((-1,0)^{\top}\)&\((0,-1)^{\top}\)&\((\frac{\sqrt{2}}{2},-\frac{\sqrt{2}}{2})^{\top}\)& 63 \\
 \bottomrule
  \end{tabular}
\end{table}

Actually, basic experiments show that the best number of interpolation points at a step is not the same for problems with different structure. A primary consideration is that fewer interpolation points can lead to lower computational cost, and perhaps provide a more stable updating with the guarantee of the projection property given in Theorem \ref{projection}, since $\|Q_k-Q_{k-1}\|_{H^2(\Omega)}$ can be smaller when there are fewer interpolation constraints for $Q_k$ when solving problem (\ref{H2example}). Notice that the stability here is compared in the sense of different number of interpolation conditions, which is another claim beyond what Fig. \ref{sub-zong-2} shows. Moreover, an obvious fact is that the number of interpolation points can be selected according to the accuracy need of the interpolation approximation in the optimization process.

\end{example}

\subsection{Performance profile and data profile}

To further observe the numerical behavior of our algorithm based on least \(H^2\) norm updating quadratic model function, we try to solve the classical test problems and present the numerical results using the criterion called performance profile \cite{dolan2002benchmarking,audet2017derivative} and data profile \cite{BenchmarkingDFO,audet2017derivative}, which has been used for comparing derivative-free algorithms \cite{BenchmarkingDFO,audet2017derivative}. They are currently two of the most common way to compare several algorithms across a large test set of problems. We can capture information on efficiency (speed of convergence) and robustness (portion of problems solved) in a compact graphical format. The test problems with performance profile in Fig. \ref{fig-perf-four} and data profile in Fig. \ref{fig-data} are listed in Table \ref{table5}, of which the dimension is in the range of 10 to 100. They are all  from classical and common unconstrained optimization test functions collections \cite{powell04onupdating, Powell2003, Conn1994, Toint1978, Li2009, Luksan2010, Andrei2008, Li1988, CUTEr}.

\begin{table}[htb!]
  \centering   
  \setlength\tabcolsep{3.3pt}
    \caption{Test problems\label{table5}} 
    \fontsize{8}{8}\selectfont
  \begin{tabular}{lllllll}  
    \toprule 
    ARGLINA& ARGLINB & ARGTRIG& BDQRTIC & BDALUE\\  BRYBND &CHAINWOO & CHEBYQAD & CHNROSNB & CHPOWELLS\\
    COSINECUBE& CURLY10 & CURLY20 & CURLY30 & DIXMAANE \\
     DIXMAANH & DIXMAANI &  DIXMAANK & DIXMAANO &DIXMAANP \\
     DQRTIC &ERRINROS &
    EXPSUM &EXTROSNB  & FLETCHCR \\
     FREUROTH  & GENROSE &
    INTEGREQ &MOREBV & NCB20 \\
     NONDQUAR &
    POWELLSG &   POWER &ROSENBROCK  &SBRYBND  \\
    SCOSINE &    SINQUAD &  SPARSINE &  SPHRPTS &SPMSRTLS \\
     SROSENBR & TOINTGSS & TQUARTIC   &WOODS &VARDIM \\ 
    \bottomrule   
  \end{tabular}
\end{table}

The performance profile and data profile depend on the numbers of iterations taken by all algorithms in an algorithm set \(\mathcal{A}\) to achieve a given accuracy when solving problems in a given problem set.

We define the value  
\[
f_{\mathrm{acc}}^{N}=({f(\x_{N})-f(\x_{\text{int}})})/({f(\x_{\text{best}})-f(\x_{\text{int}})}) \in [0,1],
\] 
and the tolerance \(\hat{\tau} \in [0,1]\), where \(\x_{N}\) denotes the best point found by the algorithm after \(N\) function evaluations, \(\x_{\text{int}}\) denotes the initial point, and \(\x_{\text{best}}\) denotes the best known solution {\color{black}obtained by all of the compared algorithms}. When \(f_{\mathrm{acc}}^{N} \ge 1-\hat{\tau}\),  we say that the solution reaches the accuracy \(\hat{\tau}\).

We denote \(N_{s,p}=\min\{n \in \mathbb{N},\ f_{\mathrm{acc}}^{n}\ge 1-\hat{\tau} \}\). $T_{s, p}=1$, if $f_{\mathrm{acc}}^{N} \geq 1-\hat{\tau}$ for some $N$; and $T_{s, p}=0$, if the solution of solver $s$ fails to reach the accuracy $\hat{\tau}$ on problem $p$ before the termination. 

Besides, we define 
\begin{equation*}
\begin{aligned}
&r_{s, p}=\left\{\begin{aligned}
&\frac{N_{s, p}}{\min \left\{N_{\tilde{s}, p}: \tilde{s} \in \mathcal{A}, T_{\tilde{s}, p}=1\right\}},\ \text {if} \ T_{s, p}=1, \\
&+\infty, \ \text {if} \ T_{s, p}=0,
\end{aligned}\right.
\end{aligned}
\end{equation*}
where \(s\) is the corresponding solver or algorithm. For the given tolerance \(\hat{\tau}\) and a certain problem \(p\) in the problem set \(\mathcal{P}\), the parameter \(r_{s, p}\) shows the ratio of the number of the function evaluations using the solver \(s\) divided by that using the fastest algorithm on the problem \(p\).

In the performance profile, \(\pi_{s}(\hat{\alpha})=\frac{1}{\vert\mathcal{P}\vert}\left\vert\left\{p \in \mathcal{P}: r_{s, p} \leq \hat{\alpha}\right\}\right\vert\), where \(\vert\cdot\vert\) denotes the cardinality. Generally, $\pi_{s}(\hat{\alpha})$ is the fraction of problems with a performance ratio $r_{p, s}$ bounded by $\hat{\alpha}$. Notice that a higher value of \(\pi_s(\hat{\alpha})\) represents solving more problems successfully. In particular, we know the following facts \cite{BenchmarkingDFO}.  
\begin{itemize}
\item[-] $\pi_{s}(1)$ is the fraction of problems for which solver $s \in \mathcal{A}$ performs the best (with the fewest number of evaluations, which is exactly the most important indicator for evaluating the convergance speed of derivative-free algorithms);

\item[-] $\pi_{s}(\hat{\alpha})$ is the fraction of problems solved by $s \in \mathcal{A}$, for a sufficiently large $\hat{\alpha}$. 
\end{itemize}

In addition, we use the data profile to provide raw information for a user with an expensive optimization problem, while the performance profiles focus on comparing different solvers. In particular, the data profile provides the number of function evaluations required to solve any of the problems, and they are useful to users with a specific computational budget who need to choose a solver that is likely to reach a given reduction in function value. In the data profile, 
$$
\delta_s(\hat{\beta})=\frac{1}{\vert \mathcal{P}\vert}\left\vert \left\{p \in \mathcal{P}: N_{s, p} \leq \hat{\beta}\left(n+1\right) T_{s, p}\right\}\right\vert,
$$
and a higher value of \(\delta_s(\hat{\alpha})\) represents solving more problems successfully.

\subsubsection{Efficiency comparison with least Frobenius norm updating quadratic model}

\label{Efficiency comparison with least Frobenius norm updating quadratic model}

Numerical results about the interpolation error and the simple simulation above show that the least $H^2$ norm updating qudratic model function is more accurate, and the numerical result in the following will show that such model accuracy can help the algorithms using our model converge faster than algorithms using least Frobenius norm updating quadratic model.

For each problem in this experiment, all algorithms start with an input point \(\x_{\text{int}}\), and the tolerance \(\hat{\tau}\) are separately set as \(10^{-1}, 10^{-3}, 10^{-5}\) and \(10^{-7}\). ``Powell'' denotes the algorithm with framework as Algorithm \ref{algo-TR} using Powell's least Frobenius norm updating quadratic model. ``Ours ($m=2n+1$)'' and ``Ours ($m=\lceil \frac{n}{2}\rceil+1$)'' both use least $H^2$ norm updating quadratic models, and share the same framework with ``Powell''. For the three algorithms in Fig. \ref{fig-perf-four}, the tolerances of the trust-region radius and the gradient norm  are set as $10^{-8}$ respectively. Their common initial trust-region radius is 1. {\color{black}The} parameters in Algorithm \ref{algo-TR} are $\gamma=2, \hat{\eta}_1=\frac{1}{4}, \hat{\eta}_2=\frac{3}{4}$, $\mu=0.1$. 
In order to achieve a fair comparison with other quadratic models, \(2n+1\) interpolation points are used at each iteration of methods ``Powell'' and ``Ours ($m=2n+1$)'', and they share the same initial interpolation points, $\x_{\text{int}}, \x_{\text{int}}\pm \e_{i}, i=1,\ldots,n$. Besides, \(\lceil \frac{n}{2}\rceil +1\) interpolation points are used at each iteration of method ``Ours ($m=\lceil \frac{n}{2}\rceil+1$)'', and the initial interpolation points of method ``Ours ($m=\lceil \frac{n}{2}\rceil+1$)'' are $\x_{\text{int}}, \x_{\text{int}}+ \e_{\lceil\frac{i}{2}\rceil}, i=1,\ldots,n$. 

In fact, as we discussed, choosing different $m$ for different problems has different performance, and it is worth being further studied in the future since least $H^2$ norm updating quadratic model already reduces the lower bound of the number of  interpolation conditions at each step. The performance of ``Ours ($m=\lceil \frac{n}{2}\rceil+1$)'' can show numerical advantages of using fewer interpolation points at each iteration.

\begin{figure}[htbp]
\centering
\includegraphics[scale=0.31]{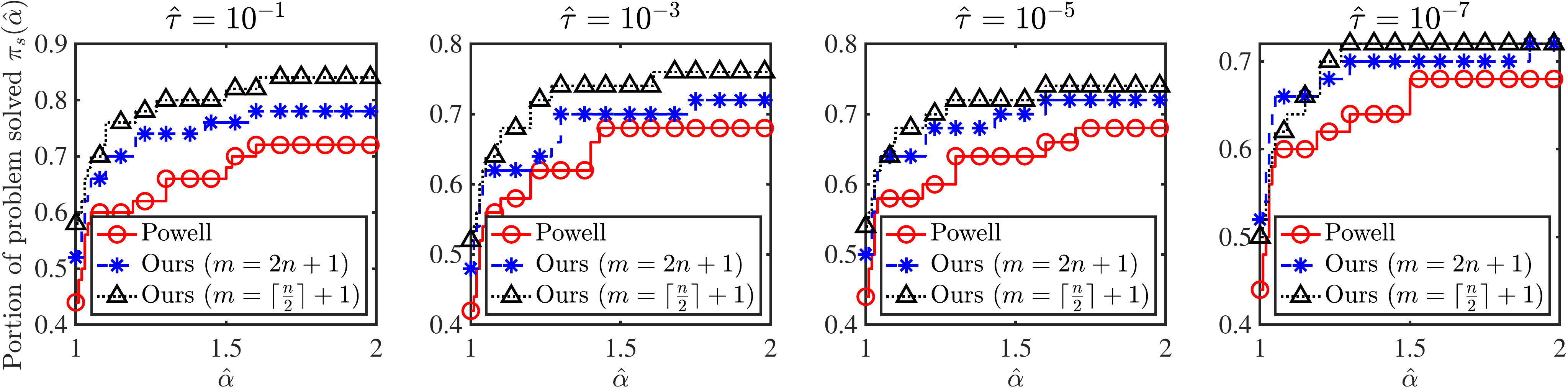}
\caption{Performance profiles of solving test problems with derivaitve-free trust-region algorithms based on different quadratic models.\label{fig-perf-four}}
\end{figure}

In Fig. \ref{fig-perf-four}, values of $\pi_s(1)$ give the results that the number of problems that algorithm based on $H^2$ norm updating quadraic model with $m=\lceil \frac{n}{2}\rceil +1$ solved uses the fewest number of evaluations (including ties) in the cases $\hat{\tau}=10^{-1}, \hat{\tau}=10^{-3},\hat{\tau}=10^{-5}$, and the one with $m=2n+1$ uses the fewest number (including ties) in the case $\hat{\tau}=10^{-7}$. The two algorithms using least $H^2$ norm updating quadratic models are more efficient than the one using least Frobenius norm updating quadratic model.

\subsubsection{Numerical comparison with other derivative-free algorithms}

For a further numerical observation on the overall performance of our least $H^2$ norm updating quadratic model, we give the  comparison result with some matural and state-of-the-art algorithms using the data profile in the following. Compared derivative-free trust-region algorithms based on Powell's model (least Frobenius norm updating quadratic model) and least Frobenius norm model \cite{conn1997convergence} are separately the Python interface of NEWUOA \cite{NEWUOA} in PDFO\footnote{https://www.pdfo.net} and the Python implementation of the DFO algorithm\footnote{https://coral.ise.lehigh.edu/katyas/software}. Besides, Neader-Mead simplex algorithm and BFGS method with first-difference estimated derivatives are obtained from the scipy.optimize library\footnote{https://docs.scipy.org/doc/scipy}.  

For each problem in this experiment, all algorithms start with an input point \(\x_{\text{int}}\), and the tolerance \(\hat{\tau}\) is set as \(10^{-2}\). We keep the same settings of ``Ours ($m=2n+1$)'' and ``Ours ($m=\lceil \frac{n}{2}\rceil +1$)'' as that in Section \ref{Efficiency comparison with least Frobenius norm updating quadratic model}. For the trust-region algorithms ``NEWUOA'', and ``DFO-py'' in Fig. \ref{fig-data}, the tolerances of the trust-region radius and the gradient norm  are set as $10^{-8}$ respectively. Their common initial trust-region radius is 1. Besides, \(2n+1\) interpolation points are used at each iteration of  trust-region methods ``NEWUOA'' and ``DFO-py'', in our numerical experiments, and they share the same initial interpolation points with ``Ours ($m=2n+1$)''. 
For ``Nelder-Mead-py'', the initial simplex is an $n+1$ dimensional simplex with vertices $\x_{\text{int}}, \x_{\text{int}}+\e_{i}, i=1,\ldots,n$, and the bound of the absolute error of the function value between iterations is set as $10^{-8}$. 
For ``BFGS'', the relative step size to use for numerical approximation of the gradient is selected automatically by setting ``None'', and the gradient norm must be less than $10^{-8}$ before successful termination.

\begin{figure}[htbp]
\centering
\includegraphics[scale=0.28]{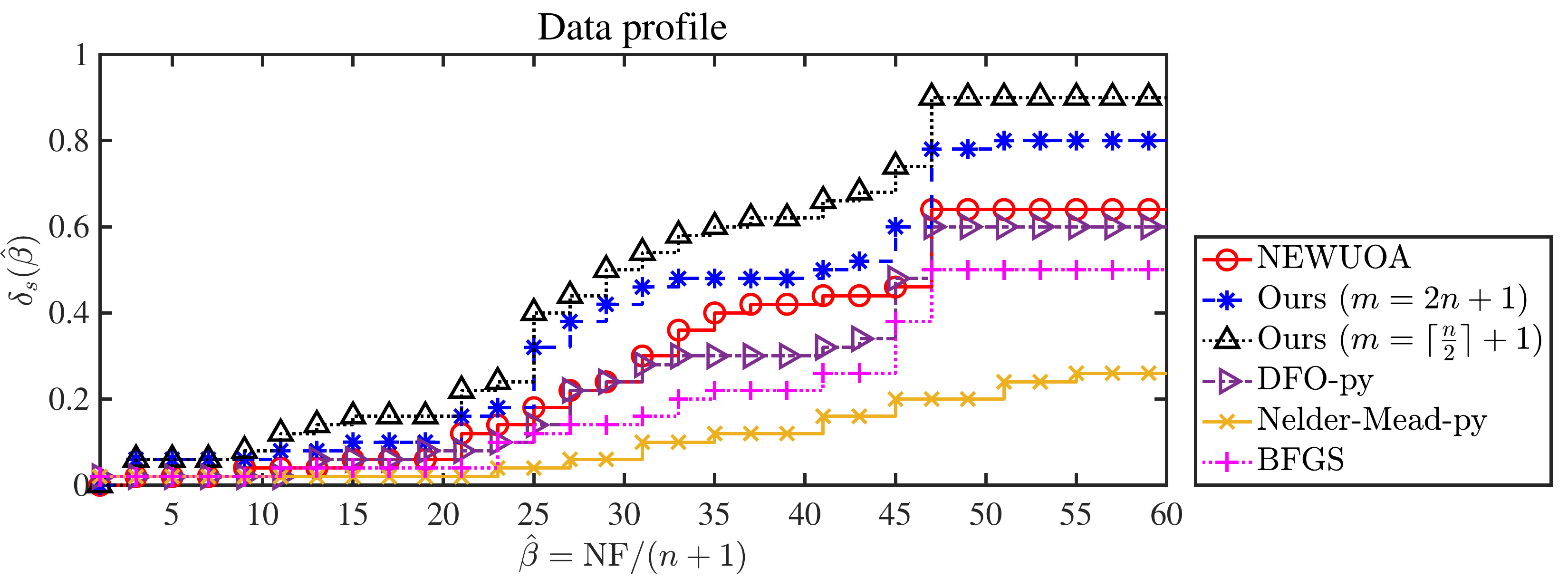}
\caption{Data profile of solving test problems with different algorithms.\label{fig-data}}
\end{figure}

We can observe from Fig. \ref{fig-data} that the trust-region algorithms based on least \(H^2\) norm updating quadratic model functions have better performance on efficiency and robustness than others, and they both can solve more than 75\% problems when \(\hat{\beta}\) is approximately larger than 46. The method using the model, formed by minimizing $H^2$ norm satisfying $m=\lceil\frac{n}{2}\rceil+1$ interpolation condition equations at each iteration, is more robust and efficient than the others, which can be seen from the values of $\delta_s(\hat{\beta})$. The results above show the efficiency and robustness advantages of the algorithm based on our model.

\section{Conclusions and Future Work\label{section8}}
\begin{sloppypar}
Under-determined interpolation of quadratic model functions can reduce interpolation points and the number of function evaluations for derivative-free trust-region optimization methods. In order to obtain a unique quadratic model function, it is a conventional method to determine the coefficients of the quadratic function in each iteration by solving an optimization problem constrained by interpolation conditions. This paper tries to obtain quadratic model function by minimizing the \(H^2\) norm of the difference between the new quadratic model function and the old one in the iteration, which reduces the lower bound of the number of interpolation points or equations. Projection property and the error bound are given. A corresponding updating formula to calculate the coefficients of model function is also derived based on the KKT condition of a convex optimization problem. Updating formulas above provide more choices for under-determined least norm updating quadratic models. Numerical results indicate the better performance of our algorithm from different perspectives. 
\end{sloppypar}

As a future work, more convergence properties of derivative-free trust-region algorithms based on least \(H^2\) norm updating models can be developed. One can also design adaptive weight coefficients for problems with different structure, and obtain the $k$-th quadratic model function by minimizing
\begin{equation*}
C^{(k)}_{1}\Vert Q-Q_{k-1}\Vert^2_{H^{0}(\Omega)}+C^{(k)}_{2}\vert Q-Q_{k-1}\vert^2_{H^{1}(\Omega)}+C^{(k)}_{3}\vert Q-Q_{k-1}\vert^2_{H^{2}(\Omega)}
\end{equation*}
in practice, where $k$ denotes the $k$-th  iteration step. Another possible future work is to look for better choices of the number of the interpolation points, since we have already reduced the lower bound of such number. Other types of interpolation models for derivative-free optimization can be studied on for the further research.


\bibliographystyle{siamplain}
\bibliography{references}
\end{document}